\let\oldmarginpar\marginpar
\renewcommand\marginpar[1]{\-\oldmarginpar[\raggedleft\footnotesize #1]%
{\raggedright\footnotesize #1}}
\newtheorem{thm}{Theorem}[section]
\newtheorem{lem}[thm]{Lemma}
\newtheorem{prop}[thm]{Proposition}
\newtheorem{cor}[thm]{Corollary}
\newtheorem{exa}[thm]{Example}
\newenvironment{ex}{\begin{exa} \rm}{\end{exa}}
\newcommand{\set}[2]{\{#1:#2\}}
\newcommand{\genset}[1]{\langle#1\rangle}
\newcommand{\rrel}{\mathcal{R}}
\newcommand{\lrel}{\mathcal{L}}
\newcommand{\N}{\mathbb{N}}
\renewcommand{\a}{\boldsymbol\alpha}
\renewcommand{\b}{\boldsymbol\beta}
\renewcommand{\r}{\mathbf{r}}
\renewcommand{\to}{\longrightarrow}
\newcommand{\myloop}[3]{\draw[decoration={
        markings,
        mark=at position 1 with {
            \arrow[scale=2]{>}
        };
    },
    postaction={decorate}] #1 to [out=45, in=-45] #2 to [out=135,in=45] #3 to [out=-135,in=135] #1;}
\tikzset{
vertex/.style={circle,draw,fill,inner sep=0pt,minimum size=1mm},
  ar/.style={
        decoration={markings,mark=at position 1 with {\arrow[scale=2]{>}}},
        ,
        postaction={decorate}
    },
  loop/.style={min distance=15mm,in=45,out=135
     }
}
\begin{document}

\title{Ends of semigroups}
\author{S. Craik, R. Gray, V. Kilibarda, J. D. Mitchell, and N. Ru\v skuc}
\maketitle
\begin{abstract} 
We define the notion of the partial order of ends of the Cayley graph of a semigroup. We prove that the structure of the ends of a semigroup is invariant under change of finite 
generating set and at the same time is inherited by subsemigroups and extensions of finite Rees index. We  prove an analogue of Hopf's Theorem, stating that a group has 1, 2 or 
infinitely many ends, for left cancellative semigroups and that the cardinality of the set of ends is invariant in subsemigroups and extension of finite Green index in left cancellative 
semigroups.
 \end{abstract}


\section{Introduction}

The study of ends in group theory has been extensive and has had widespread influence.  Stallings' Theorem characterising groups with more than one end has been used in such varied topics as distance-transitive graphs \cite{Macpherson}, groups with context-free word problem \cite{Muller+Schupp}, pursuit-evasion problems in infinite graphs \cite{PursuitEvasion} and to describe accessible groups  \cite{Dunwoody}. This paper follows the trend of relating geometric properties of Cayley graphs of semigroups to algebraic properties, see for example \cite{GeoAutoSemi}, \cite{Remmers} and \cite{TropicalJ}.  We consider the notions of ends for a semigroup and try to recover some basic theorems from the theory of ends of groups. 

In this paper we consider a definition for ends of digraphs introduced by Zuther in \cite{Zuther} and apply it to the left and right Cayley graphs of a semigroup. In \cite{Vesna} Jackson and Kilibarda introduce a notion of ends for semigroups which is based on the ends of the underlying undirected graph of the Cayley graph. They prove that the number of ends of a semigroup is invariant under change of finite 
generating set and provide examples of semigroups with $n$ ends in the left Cayley graph and $m$ ends in the right Cayley graph for any prescribed $n,m \in \mathbb{N}$.
We argue that although there are many ways to generalise the notion of ends to a semigroup; by preserving the notion of direction there is a greater chance of interrelating the algebraic structure and the ends.

In the remainder of this section we introduce the relevant definitions and technical results required  to prove the main theorems of this paper. 
In Section \ref{ends of a semigroup}, we prove that the structure of the ends of a semigroup is invariant under change of finite generating set and at 
the same time is 
inherited by subsemigroups and extensions of finite Rees index. 
In Section \ref{left_cancel_sect}, we  prove an analogue of Hopf's Theorem, 
stating that a group has 1, 2 or infinitely many ends,
for left cancellative semigroups and that the cardinality of the set of ends is invariant in subsemigroups and 
extensions of finite Green 
index in left cancellative semigroups.


Let $\Omega$ be any set and let  $\Gamma\subseteq \Omega\times \Omega$. We will refer to $\Gamma$ as a 
\emph{digraph} on 
$\Omega$, the elements of $\Omega$ as the \emph{vertices} of $\Gamma$, and the elements of $\Gamma$ as 
\emph{edges}. A 
\emph{walk} in $\Gamma$ is just a (finite or infinite) sequence $(v_0, v_1, \ldots)$ of (not necessarily distinct) 
vertices such that 
$(v_i, v_{i+1})\in\Gamma$ for all $i$. An \emph{anti-walk} is a sequence  $(v_0, v_1, \ldots)$ of (not necessarily distinct) 
vertices such that 
$(v_{i+1}, v_{i})\in\Gamma$  for all $i$.  
A \emph{path} in $\Gamma$ is just a walk consisting of distinct vertices.   If 
$\a=(\alpha_0, \alpha_1,\ldots, \alpha_n)$ is a walk in $\Gamma$, then the \emph{length} of $\a$ is $n$ and it is straightforward to verify that $\a$ 
contains a path from $\alpha_0$ to $\alpha_n$.
A \emph{ray} in $\Gamma$ is just an infinite path $(v_0, v_1, \ldots)$ such that $(v_i, v_{i+1})\in\Gamma$ for all 
$i$ and an 
\emph{anti-ray} is an infinite path $(v_0, v_1, \ldots)$ such that $(v_{i+1}, v_{i})\in\Gamma$ for all $i$. 
If $\a=(\alpha_0, \alpha_1, \ldots, \alpha_m)$ and $\boldsymbol\beta=(\beta_0, \beta_1, \ldots, \beta_n)$ are 
arbitrary finite 
sequences of elements from $\Omega$, then we denote by $\a^{\smallfrown}\boldsymbol\beta$ the sequence 
$(\alpha_0, \alpha_1, \ldots, \alpha_m, \beta_0, \beta_1, \ldots, \beta_n)$.

The \emph{out-degree} of a vertex $\alpha$ in a digraph $\Gamma$ is just 
$|\set{\beta\in \Omega}{(\alpha,\beta)\in \Gamma}|$. 
A digraph $\Gamma$ is \emph{out-locally finite} if every vertex has finite out-degree. 

If $\Sigma\subseteq \Omega$, then $\Gamma\cap (\Sigma\times \Sigma)$ is the \emph{induced subdigraph} of 
$\Gamma$ on $\Sigma$. If $\Sigma$ and  $\Sigma'$ are infinite subsets of $\Omega$, then we write 
$\Sigma'\preccurlyeq \Sigma$ if there exist infinitely many disjoint paths (including paths of length $0$) in 
$\Gamma$ with initial vertex belonging to $\Sigma'$ and final vertex belonging to $\Sigma$. It is straightforward to verify that $\preccurlyeq$ is reflexive on infinite 
subsets of $\Omega
$ but not necessarily transitive, symmetric, or anti-symmetric.  However, it was shown in \cite{Zuther} that if $\preccurlyeq$ is restricted to the set of rays and anti-rays on $\Gamma$, then it is transitive, 
and hence a 
preorder. 
If $\Sigma'\preccurlyeq \Sigma$ and $\Sigma \preccurlyeq\Sigma'$, then we write $\Sigma  \approx \Sigma'$.   It follows that $\approx$ is an equivalence relation and $
\preccurlyeq$ 
induces a partial order on $\approx$-classes of rays and anti-rays. As such we refer to rays as being \emph{equivalent} if 
they belong to the same $\approx$-class; and \emph{inequivalent} otherwise.
We denote this poset  by $\Omega\Gamma$, and we refer to $\approx$-classes of rays as the \emph{ends} of $\Gamma$.




\begin{lem}\label{walktoray}
Let $\Gamma$ be a digraph on $\Omega$ and let $\a=(\alpha_0, \alpha_1, \ldots)$ be an infinite  walk  (or anti-walk) in  $\Gamma$ such that every vertex of $\a$ occurs 
only finitely many times. Then $\a$ contains a ray  $\r$ (or anti-ray, respectively) such that $\r$ is has infinitely many disjoint paths to every infinite subset of $\{\alpha_0, \alpha_1, \ldots\}$ and every infinite subset of  $\{\alpha_0, \alpha_1, \ldots\}$ has infinitely many disjoint paths to $\r$.
\end{lem}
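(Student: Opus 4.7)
I would split the argument into two stages: first extract the ray (or anti-ray) $\r$ from $\a$ by a greedy ``last-occurrence'' construction, then verify the disjoint-paths property by selecting a suitable subsequence of any given infinite $S\subseteq A:=\{\alpha_0,\alpha_1,\ldots\}$, relying on the finite-occurrence hypothesis to avoid vertex collisions. I focus on the walk case; the anti-walk case is obtained by reading every segment in reverse.

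For the ray, let $n_0$ be the last index at which $\alpha_0$ occurs in $\a$ and set $v_0:=\alpha_{n_0}$. Inductively, given $v_k=\alpha_{n_k}$, put $v_{k+1}:=\alpha_{n_k+1}$ and let $n_{k+1}$ be the last occurrence index of $v_{k+1}$ in $\a$. Each $n_k$ is finite by hypothesis, and $n_{k+1}\ge n_k+1>n_k$. The $v_k$ are then pairwise distinct: if $v_i=v_j$ for some $i<j$, this common vertex would occur at $n_j>n_i$, contradicting $n_i$ being its last occurrence. Since $(\alpha_{n_k},\alpha_{n_k+1})\in\Gamma$ in the walk case, $\r:=(v_0,v_1,\ldots)$ is a ray; in the anti-walk case $(v_{k+1},v_k)\in\Gamma$ and $\r$ is an anti-ray.

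For the disjoint paths, enumerate $S$ by its last-occurrence indices in $\a$: write $\beta_i:=\alpha_{m_i}$ with $m_1<m_2<\cdots$. The segment $(\alpha_{m_i},\ldots,\alpha_{n_{k_i}})$, where $k_i$ is minimal with $n_{k_i}\ge m_i$, is a walk from $\beta_i$ to $v_{k_i}$ and therefore contains a path $Q_i$ from $\beta_i$ to $\r$. For the reverse direction, given any $K$ with $n_K\le m_i$, the segment $(\alpha_{n_K},\ldots,\alpha_{m_i})$ yields a path $P_i^{(K)}$ from $v_K$ to $\beta_i$. I then pick the required infinite families greedily. Having chosen $Q_{i_1},\ldots,Q_{i_t}$ pairwise vertex-disjoint, let $L_t$ be the maximum last-occurrence index of the (finitely many) vertices they use, and pick $i_{t+1}$ with $m_{i_{t+1}}>L_t$; every vertex on $Q_{i_{t+1}}$ occurs at some index $\ge m_{i_{t+1}}>L_t$, so coincides with no previously used vertex. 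For the $P$-direction, additionally choose $K_t$ with $n_{K_t}>L_t$ (possible since $n_k\to\infty$) and then $i_{t+1}$ with $m_{i_{t+1}}\ge n_{K_t}$, setting $P_{i_{t+1}}:=P_{i_{t+1}}^{(K_t)}$; the same indexing argument yields disjointness.

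The hard part is this last disjointness step: walk segments can share vertices freely, and one must convert them into pairwise vertex-disjoint paths. The finite-occurrence hypothesis is used essentially here, because a finite collection of already-chosen vertices has only a finite footprint in $\a$, beyond which any fresh segment automatically avoids them.
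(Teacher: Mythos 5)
Your proposal is correct and follows essentially the same route as the paper: the ray is extracted by the same ``jump to the vertex following the last occurrence'' recursion, and the disjoint path families are obtained by the same device of pushing each new segment past the (finite) last-occurrence footprint of the vertices already used. The only cosmetic difference is that the paper packages each pair of paths (ray-to-$\Sigma$ and $\Sigma$-to-ray) into a single walk $\b_i$ between consecutive ray vertices, whereas you build the two families separately; both are valid.
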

\proof
We prove that $\a$ contains a ray in the case that $\a$ is a walk;  an analogous argument proves that 
$\a$ contains an anti-ray in the case that it is an anti-walk. 

Let $a(0)=1$ and for every  $i\geq 1$  define $a(i)=\max\set{j\in \N}{\alpha_j=\alpha_{a(i-1)}}+1$, i.e. $\alpha_{a(i)-1}$ is the last appearance of $\alpha_{a(i-1)}$ in $\a$.  
We will 
show that 
$$\r=(\alpha_{a(0)}, \alpha_{a(1)}, \ldots)$$
is the required ray.
Since $(\alpha_{i}, \alpha_{i+1})\in \Gamma$ for all $i$, in particular, 
$(\alpha_{a(i-1)}, \alpha_{a(i)})=(\alpha_{a(i)-1}, \alpha_{a(i)})\in \Gamma.$  
Hence $\mathbf{r}$ is an infinite walk where $\alpha_{a(i)}\not=\alpha_{a(j)}$ for all 
$i,j\in \N$ such that $i\not=j$ and so  $\r$ is a ray.

Let $\Sigma$ be any infinite subset of $\{\alpha_0, \alpha_1, \ldots\}$.  
If infinitely many elements in $\Sigma$ are vertices of $\mathbf{r}$, then $\r$ is equivalent to $\Sigma$.  
If only finitely many elements of $\Sigma$ belong to $\mathbf{r}$, then  
$\Sigma\setminus\{\alpha_{a(0)}, \alpha_{a(1)}, \ldots\}$ is equivalent to $\Sigma$ and so  we may assume 
without loss of generality that 
$\Sigma$ contains no elements in  $\{\alpha_{a(0)}, \alpha_{a(1)}, \ldots\}$.

We define infinitely many disjoint paths from $\Sigma$ to $\r$ by induction. Let $b(0)\in\N$ be any number such that $\alpha_{b(0)}\in \Sigma$.  Then there exists $k(0)\in 
\N$ such 
that
$a(k(0))<b(0)<a(k(0)+1)$ and $\b_0:=(\alpha_{a(k(0))}, \alpha_{a(k(0))+1}, \ldots, a_{b(0)}, \ldots, \alpha_{a(k(0)+1)})$ is a walk from $\alpha_{a(k(0))}$ in $\r$ to $
\alpha_{a(k(0))+1}$ 
in $\r$ via 
$a_{b(0)}\in \Sigma$. Since every finite walk contains a path,  we conclude that there is a path contained in 
$\b_0$ from a vertex of $\r$ to $a_{b(0)}\in \Sigma$ and  a path back from  $a_{b(0)}$  to a vertex of $\r$.

 Suppose that we have defined $b(0), \ldots, b(i-1), k(0), \ldots, k(i-1)\in\N$ and finite walks $\b_0, \b_1, \ldots, \b_{i-1}$ for some $i\geq 1$. 
 Choose $k(i), b(i)\in \N$  so that $b(i)>a(k(i))$, $\alpha_j$ does not equal any vertex in any of $\b_0, \b_1, \ldots, \b_{i-1}$ for all $j>a(k(i))$, and $\alpha_{b(i)}\in \Sigma$. 
Then we define  
$$\b_i=(\alpha_{a(k(i))}, \alpha_{a(k(i))+1}, \ldots, \alpha_{b(i)}, \ldots, \alpha_{a(k(i)+1)}).$$

By construction, if $i\not=j$, then 
$\boldsymbol\beta_i$ and  $\boldsymbol\beta_j$ are disjoint and so we have infinitely many disjoint paths (contained in the $\b_i$) from $\r$ to $\Sigma$ and back, as 
required.
\qed\vspace{\baselineskip}


\begin{lem}\label{concatenation}
Let $\Gamma$ be an out-locally finite digraph on a set $X$ and let $\mathbf{w}_0, \mathbf{w}_1, \ldots$ be finite walks of bounded length in $\Gamma$ with distinct final 
vertices. 
Then every vertex in the sequence $\mathbf{w}_0^\smallfrown\mathbf{w}_1^\smallfrown\cdots$ occurs only finitely many times.
\end{lem}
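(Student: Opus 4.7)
The plan is to use the bounded length of the walks together with the out-locally finite hypothesis to control which walks can contain any given vertex.

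Let $N$ be the uniform bound on the lengths of the walks $\mathbf{w}_i$, and fix an arbitrary vertex $v\in X$. My first step will be to show that the \emph{forward ball} $R_N(v)=\set{u\in X}{\text{there is a walk from } v \text{ to } u \text{ of length } \leq N}$ is finite. This follows by induction on $N$: out-local finiteness gives the base case $N=1$, and $R_N(v)$ is the union of $R_1(u)$ as $u$ ranges over the (finite) set $R_{N-1}(v)$, so the inductive step is immediate.

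Next I will observe that if $v$ is a vertex of $\mathbf{w}_i$, then the final vertex of $\mathbf{w}_i$ lies in $R_N(v)$, because the portion of $\mathbf{w}_i$ from $v$ onwards is a walk of length at most $N$ ending at that final vertex. Since the final vertices of the $\mathbf{w}_i$ are pairwise distinct and $R_N(v)$ is finite, at most finitely many of the walks $\mathbf{w}_0, \mathbf{w}_1, \ldots$ can contain $v$.

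Finally, each walk $\mathbf{w}_i$ has at most $N+1$ vertices, so $v$ appears at most $N+1$ times within any single walk that contains it. Multiplying the finite number of walks containing $v$ by $N+1$ gives a finite upper bound on the number of occurrences of $v$ in the concatenation $\mathbf{w}_0^\smallfrown \mathbf{w}_1^\smallfrown \cdots$. I do not expect any real obstacle here; the only step requiring more than a line is the finiteness of $R_N(v)$, and that is a routine induction on $N$ from out-local finiteness.
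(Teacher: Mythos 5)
Your proof is correct and takes essentially the same route as the paper: both hinge on the observation that the final vertex of any walk containing $v$ lies in the out-ball of radius $N$ around $v$, which is finite by out-local finiteness, so the distinctness of the final vertices forces only finitely many walks to contain $v$. You merely make explicit two points the paper leaves implicit (the induction showing the ball is finite, and the final count of at most $N+1$ occurrences per walk), which is fine.
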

\proof Let $K\in \N$ be  bound on the lengths of $\mathbf{w}_0, \mathbf{w}_1, \ldots$.
If a vertex $v$ occurs in infinitely many of $\mathbf{w}_0, \mathbf{w}_1, \ldots$, then the set $B$ of vertices that can be reached from $v$ by a path of length at most $K
$ contains the 
final vertex of $\mathbf{w}_i $ for infinitely many $i\in\N$. But the final vertices of the $\mathbf{w}_i$ are distinct and so $B$ is infinite, contradicting the assumption that 
$\Gamma$ is 
out-locally finite. 
\qed


\begin{lem}\label{dominance}
Let $\Gamma$ be an out-locally finite digraph on $\Omega$, let $\Sigma\subseteq \Omega$ be infinite and let $\alpha_0\in\Omega$ such that there is a path from $
\alpha_0$ to every  
$\beta\in \Sigma$. Then there exists a ray  $\bf{r}$ in $\Gamma$ starting at $\alpha_0$ such that $\Sigma\preccurlyeq\bf{r}$.  
\end{lem}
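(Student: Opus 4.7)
My plan is to apply K\"onig's Infinity Lemma to a tree of paths derived from a spanning out-tree, and to read off the disjoint witness paths directly from that tree structure.

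First, pass to the induced subdigraph $\Delta$ of $\Gamma$ on the vertices reachable from $\alpha_0$. By hypothesis $\Sigma \subseteq V(\Delta)$, and $\Delta$ is out-locally finite. Pick a spanning out-tree $A$ of $\Delta$ rooted at $\alpha_0$, so that each $v \in V(\Delta)$ has a unique $A$-path from $\alpha_0$; for $\beta \in \Sigma$ let $\mathbf{w}_\beta$ denote this path. The key feature of $A$ is that any two $A$-paths sharing a common prefix then split into disjoint subtrees afterwards.

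Next, let $T \subseteq A$ be the subtree formed by the union of the $\mathbf{w}_\beta$. Then $T$ is finitely branching (inherited from out-local-finiteness) and has infinitely many leaves, one for each $\beta \in \Sigma$, so $T$ is infinite. K\"onig's Infinity Lemma yields an infinite branch of $T$, namely a ray $\mathbf{r} = (\alpha_0, r_1, r_2, \ldots)$ in $\Gamma$ starting at $\alpha_0$, with the property that at every depth $n$ infinitely many $\mathbf{w}_\beta$ pass through $r_n$. Writing $m_\beta := \max\{n : r_n \text{ lies on } \mathbf{w}_\beta\}$, this property forces $\{m_\beta : \beta \in \Sigma\}$ to be unbounded, and hence to take infinitely many distinct values.

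Finally, select $\beta_1, \beta_2, \ldots \in \Sigma$ with $m_{\beta_1} < m_{\beta_2} < \cdots$ and take for the $k$-th witness the $\mathbf{w}_{\beta_k}$-suffix $s_{\beta_k}$ running from $r_{m_{\beta_k}}$ to $\beta_k$. Off its initial $\mathbf{r}$-vertex, the path $s_{\beta_k}$ lives in the $A$-subtree rooted at an off-$\mathbf{r}$ child of $r_{m_{\beta_k}}$; since the $m_{\beta_k}$ are pairwise distinct, these $\mathbf{r}$-starting vertices are distinct and the off-$\mathbf{r}$ subtrees in which the remainder of each suffix lives are pairwise disjoint, so the $s_{\beta_k}$ form a pairwise vertex-disjoint family of paths between $\mathbf{r}$ and $\Sigma$, witnessing the $\preccurlyeq$-comparison asserted in the lemma. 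The main obstacle is precisely this disjointness, which the spanning-tree trick finesses: without $A$, candidate suffixes extracted from arbitrarily chosen $\mathbf{w}_\beta$'s could meet one another at off-$\mathbf{r}$ bottleneck vertices, and one would be forced into a secondary K\"onig-style avoidance argument inside a residual subgraph to prune the overlaps.
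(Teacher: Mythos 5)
Your proof is correct, and it takes a genuinely different route from the one in the paper. The paper keeps an arbitrary choice of paths $q_\beta$ from $\alpha_0$ to the elements of $\Sigma$ and builds the ray segment by segment: at each stage it fixes one path $p_{i+1}$, applies the pigeonhole principle to the last vertex that each surviving $q_\beta$ shares with $p_{i+1}$ to locate the next branch point $\alpha_{i+1}$, and thereby arranges by hand that the leftover tails avoid the portion of the ray already built. You instead normalise the whole family at the outset by routing every $\beta$ through a spanning out-tree $A$ rooted at $\alpha_0$, so that any two of your paths $\mathbf{w}_\beta$ share a common prefix and are disjoint thereafter; a single application of K\"onig's lemma to the finitely branching subtree $T$ then produces the ray, and the disjointness of the witness suffixes falls out of the tree structure, replacing the paper's stage-by-stage pigeonhole. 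Both arguments use out-local finiteness in the same place (finite branching), and both produce witness paths running from the ray to $\Sigma$, the same direction as the paper's own construction, so your conclusion agrees with the paper's. Two small points of precision, neither a gap: the elements of $\Sigma$ need not all be leaves of $T$ (one may be an interior vertex of another's $\mathbf{w}_{\beta'}$), but $T\supseteq\Sigma$ is infinite regardless; and the branch from K\"onig's lemma should be chosen so that the subtree of $T$ below each $r_n$ is infinite, which is what forces infinitely many $\mathbf{w}_\beta$ through $r_n$ and hence the unboundedness of the $m_\beta$ --- this is exactly the standard proof of K\"onig's lemma, so it only needs to be said.
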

\begin{proof}
We construct $\bf{r}$ recursively. 
Start by setting  $\Sigma_0:=\Sigma$ and let $P_0$ be a set containing precisely one path $q_\beta$ from $\alpha_0$ to $\beta$  for all $\beta\in \Sigma_0$.  Then, 
since $\alpha_0$ 
has finite out-degree and there is a path in $P_0$  from $\alpha_0$ to every $\beta \in\Sigma_0$, there exists  a vertex $\gamma_0$ such that 
$(\alpha_0,\gamma_0)\in \Gamma$ and there is a path $q_{\beta}\in P_0$ from $\alpha_0$ via $\gamma_0$ to every $\beta$ in the infinite subset $\Sigma_1\subseteq 
\Sigma_0$.  

Let $\beta_1\in \Sigma_1$ be fixed and also fix a path
$$p_1=(\delta_1=\alpha_0, \delta_2=\gamma_0, \delta_3, \ldots, \delta_{n-1}, \delta_n=\beta_1).$$
Let $P_1=\set{q_{\beta}\in P_0}{\beta\in \Sigma_1}$.
If $\beta \in \Sigma_1$ is arbitrary and $q_{\beta}\in P_1$, then there exists $i(\beta)\in\mathbb{N}$ such that $\delta_{i(\beta)}$ is the last vertex  belonging to both the 
paths $p_1$ 
and $q_{\beta}$. The number $i(\beta)$ exists since,  in particular, both paths go through $\gamma_0$. By the pigeonhole principle, there exists $m\in\mathbb{N}$ such 
that $2\leq 
m\leq n$ and 
$\Sigma_2=\set{\beta \in \Sigma_1}{i(\beta)=m}$ is infinite. Set $\alpha_1=\delta_m$.  Since $m\geq 2$, 
$\alpha_1\not=\alpha_0$ and, by construction, there is a path from $\alpha_1$ to every element $\beta$ of the infinite set  $\Sigma_2$ (consisting of the vertices between 
$\alpha_1$ 
and $\beta$ in $q_{\beta}\in P_1$) such that  the only vertex in $p_1$ and this path is $\alpha_1$.  Set $P_2$ to the set of paths from $\alpha$ to $\beta\in \Sigma_2$ 
from the 
previous sentence. 

We may repeat the above process \emph{ad infinitum} to obtain for all $i>0$: $\beta_{i+1}\in \Sigma_{2i+1}$ and a path $p_{i+1}\in P_{2i+1}$ from $\alpha_i$ to $\beta_{i
+1}$, an $
\alpha_{i+1}$ in $p_{i+1}$, an infinite 
$\Sigma_{2i+2}\subseteq \Sigma_{2i+1}$ and an infinite set $P_{2i+2}$ of paths from $\alpha_{i+1}$ to every element of $\Sigma_{2i+2}$ such that the only vertex in 
$p_{i+1}$ and 
any path in $P_{2i+2}$ is $\alpha_{i+1}$. 

Hence there is a walk $\bf{r}$ containing $\set{\alpha_i}{i\in\N}$ consisting of the vertices on the paths $p_{i+1}$ between $\alpha_i$ and $\alpha_{i+1}$. In fact, by 
construction, the 
only vertex on both $p_{i}$ and $p_{i+1}$ is $\alpha_{i+1}$, and so the walk $\bf{r}$ is a ray. 
Moreover, there are infinitely many paths from $\bf{r}$ to $\Sigma$ consisting of the remaining vertices  on $p_{i+1}$ between $\alpha_{i+1}$ and $\beta_{i+1}$. Again by 
construction 
the only vertex on both $p_{i}$ and $p_{i+1}$ is $\alpha_{i+1}$ and so the paths from $\alpha_{i+1}\in \bf{r}$ to $\beta_{i+1}\in\Sigma$ are disjoint.
\end{proof}



\section{The ends of a semigroup}\label{ends of a semigroup}

Throughout this section, we let $S$ be a finitely generated semigroup and let $A$ be any finite generating set for $S$. 
The 
\emph{right Cayley graph $\Gamma_{r}(S,A)$} of $S$ with respect to $A$ is the directed
graph with vertex set $S$ and edges 
$(s, sa)\in \Gamma_{r}(S,A)$ for all $s\in S$ and for all $a\in A$. We refer to $a$ as the \emph{label} of the edge $(s, sa)$. 
The \emph{left Cayley graph}
$\Gamma_l(S,A)$ is defined dually. 

If $S$ is a semigroup, then the dual $S^*$ of $S$ is just the set $S$ with multiplication $*$ defined by $x* y=yx$ for all $x,y\in S$. 
It follows directly from the definition that $\Gamma_l(S,A)=\Gamma_r(S^{\ast},A)$. Therefore to understand the end structure of  a semigroup it suffices to study right 
Cayley graphs 
only. 

We require the following lemma to prove the results in this section.


\begin{lem} \label{path_replace}
Let $S$ be a semigroup, let $T$ be a subsemigroup of $S$ generated by a finite set $A$, and let $s\in S$. Suppose that $|\genset{T, s}\setminus T|=n\in\N$. 
Then there exists $N\in \N$ such that  for all $b_1, b_2, \ldots, b_{n}\in A\cup\{s\}$  if
$s, sb_1, \ldots, sb_1\cdots b_n$ are distinct, then there exists $i\leq n$ and $a_1, a_2, \ldots, a_j\in A$ such that 
$j\leq N$ and $sb_1\cdots b_i=a_1\cdots a_j$. 
\end{lem}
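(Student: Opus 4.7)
My approach is to combine a pigeonhole argument with the observation that the ``first entry point'' of the walk $s, sb_1, sb_1b_2, \ldots$ into $T$ is constrained to lie in a finite set of candidates determined only by $S$, $T$, $A$, and $s$, so each such candidate has some fixed $A$-word expression and thus a bounded length.

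Concretely, I would enumerate $\langle T, s\rangle \setminus T = \{u_1, \ldots, u_n\}$ and fix, once and for all, the finite set
\[
W := \{s\} \cup \{u_k b : 1 \leq k \leq n,\ b \in A \cup \{s\}\}.
\]
For each element of $W \cap T$, I would choose an expression as a word in $A$ (possible because $T = \langle A \rangle$) and let $N$ be the maximum length of the finitely many chosen words (with $N=0$ if $W \cap T = \emptyset$). The key structural point is that $W$, and therefore $N$, is defined before the $b_i$'s are chosen, so that $N$ is uniform.

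Now let $b_1, \ldots, b_n \in A \cup \{s\}$ be such that $s, sb_1, \ldots, sb_1\cdots b_n$ are pairwise distinct. These are $n+1$ distinct elements of $\langle T, s\rangle$, but $\langle T, s\rangle \setminus T$ contains only $n$ elements, so by pigeonhole at least one of them lies in $T$; let $i$ be the smallest such index. If $i = 0$, then $sb_1\cdots b_i = s \in W \cap T$. Otherwise, by minimality, $sb_1\cdots b_{i-1} \in \langle T, s\rangle \setminus T$ equals some $u_k$, and hence $sb_1\cdots b_i = u_k b_i \in W \cap T$. Either way, the preselected $A$-word for $sb_1\cdots b_i$ has length at most $N$, which gives the required $a_1, \ldots, a_j$.

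I do not foresee a real obstacle here; the whole content of the lemma is the identification of $W$ as the correct finite set of entry points, after which the argument reduces to pigeonhole plus a finite maximum. The only point deserving care is the uniformity of $N$: one must define $W$ purely in terms of the ambient data, and \emph{then} observe that for every admissible $(b_1, \ldots, b_n)$ the entry-point element is forced into $W$, rather than trying to extract a bound after the $b_i$'s are in hand.
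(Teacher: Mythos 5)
Your proof is correct and follows essentially the same strategy as the paper's: a pigeonhole argument showing that one of the $n+1$ distinct elements $s, sb_1,\ldots,sb_1\cdots b_n$ must lie in $T$, combined with a uniform bound $N$ obtained from a finite set of candidate entry points fixed in advance of choosing the $b_i$. The only (immaterial) difference is that the paper takes this finite set to be all elements $sc_1\cdots c_i\in T$ with $c_j\in A\cup\{s\}$ and $1\leq i\leq n$, whereas you use the slightly smaller set of one-step successors of elements of $\genset{T,s}\setminus T$.
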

\proof 
Let $X:=\set{s c_1c_2\ldots c_i\in T}{c_j \in A \cup \{s\},\ 1\leq i \leq n}$. Then $X$ is finite and so 
there exists $N\in\N$ such that every element of $X$ can be given as a product of elements of $A$ of length at most $N$.  By the pigeonhole principle, there exists $i$ 
such that 
$sb_1\cdots b_i\in T$   and hence $sb_1\cdots b_i\in X$. It follows that  there exist $a_1, \ldots, a_j\in A$ such that $j\leq N$ and 
$sb_1\cdots b_{i}=a_1\cdots a_j$, as required.
\qed


\begin{prop}\label{master}
Let $S$ be a semigroup, let $T$ be a subsemigroup of $S$ generated by a finite set $A$, and let $s\in S$. If $\genset{T, s}\setminus T$ is finite, then 
$\Omega\Gamma_r(T, A)$ is isomorphic (as a partially ordered set) to $\Omega\Gamma_r(\genset{T, s}, A\cup\{s\})$.
\end{prop}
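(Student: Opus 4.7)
The plan is to prove that the inclusion map $\iota \colon \Omega\Gamma_r(T,A) \to \Omega\Gamma_r(U,B)$, sending an end of the smaller Cayley graph to the end represented by the same ray in the larger one, is an isomorphism of posets; here $U := \genset{T,s}$ and $B := A \cup \{s\}$. Since $T \subseteq U$ and every edge of $\Gamma_r(T,A)$ is an edge of $\Gamma_r(U,B)$, any ray or family of disjoint paths in the smaller graph transfers verbatim into the larger one, so $\iota$ is well-defined and order-preserving for free. The content is therefore to show that $\iota$ is surjective and order-reflecting.

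The engine of the argument is a shortcut principle extracted from Lemma~\ref{path_replace}. If a path in $\Gamma_r(U,B)$ starts at some $t \in T$, takes the $s$-edge to $ts$, and then continues with vertices $ts, tsc_1, \ldots, tsc_1\cdots c_n$, then distinctness of these vertices forces distinctness of the elements $s, sc_1, \ldots, sc_1\cdots c_n$ (equal products would collapse the vertices after left-multiplication by $t$). Lemma~\ref{path_replace} then supplies some $1 \leq j \leq n$ together with $a_1, \ldots, a_k \in A$ of length $k \leq N$ with $sc_1\cdots c_j = a_1\cdots a_k$, and hence an $A$-path $t, ta_1, \ldots, ta_1\cdots a_k = tsc_1\cdots c_j$ of length at most $N$ inside $\Gamma_r(T,A)$ that lands on the vertex at most $n+1$ steps further along the original path. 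In short, any $s$-edge leaving a vertex of $T$ can be bypassed by an $A$-detour of bounded length.

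For surjectivity, let $\mathbf{r} = (v_0, v_1, \ldots)$ be a ray in $\Gamma_r(U,B)$. Finiteness of $U \setminus T$ together with distinctness of the $v_i$ allows us to truncate and assume $v_i \in T$ for every $i$. Traverse $\mathbf{r}$ edge by edge, following each $A$-edge directly in $\Gamma_r(T,A)$ and replacing each $s$-edge by the shortcut above. The result is a concatenation of finite walks of bounded length in $\Gamma_r(T,A)$ whose endpoint ``checkpoints'' form an infinite subsequence of the (pairwise distinct) $v_i$. Lemma~\ref{concatenation} then forces every vertex of the concatenation to appear only finitely often, and Lemma~\ref{walktoray} extracts from it a ray $\mathbf{r}'$ in $\Gamma_r(T,A)$ admitting infinitely many disjoint paths to and from every infinite subset of the underlying walk. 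Applied to the checkpoint subset, which lies in $\mathbf{r}$, these paths witness $\mathbf{r}' \approx \mathbf{r}$ in $\Gamma_r(U,B)$, so $\iota[\mathbf{r}'] = [\mathbf{r}]$.

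For order-reflection, given rays $\mathbf{r}, \mathbf{r}'$ in $\Gamma_r(T,A)$ with $\mathbf{r} \preccurlyeq \mathbf{r}'$ in $\Gamma_r(U,B)$, a witnessing family $(p_k)$ of disjoint paths has all but finitely many $p_k$ lying entirely in $T$ (by disjointness and $|U \setminus T| < \infty$). Applying the shortcut procedure to each such $p_k$ produces an $A$-walk in $\Gamma_r(T,A)$ starting at the original initial vertex of $p_k$ (which lies in $\mathbf{r}$) and advancing toward $\mathbf{r}'$. I expect the principal obstacle to be ensuring simultaneously that (a) these replacement walks end on $\mathbf{r}'$ rather than stopping at an earlier vertex of $p_k$, and (b) they remain pairwise disjoint: an $s$-edge within the last $n$ positions of $p_k$ can cause the shortcut to overshoot its endpoint, and the fresh internal vertices introduced by different shortcuts could a priori collide. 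Both issues can be resolved by pigeonhole thinning: extend each $p_k$ with a short $A$-tail along $\mathbf{r}'$ to afford the final shortcut enough room, then pass to an infinite subfamily along which the replacement walks are pairwise disjoint, using that each shortcut adds at most $N$ new vertices. The resulting infinitely many disjoint $A$-paths from $\mathbf{r}$ to $\mathbf{r}'$ in $\Gamma_r(T,A)$ give $\mathbf{r} \preccurlyeq \mathbf{r}'$ in the smaller graph, completing the isomorphism.
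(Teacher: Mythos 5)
Your proposal follows essentially the same route as the paper's proof: reduce every ray of $\Gamma_r(\genset{T,s},A\cup\{s\})$ to an equivalent $A$-labelled ray with vertices in $T$ via the bounded shortcuts supplied by Lemma~\ref{path_replace} together with Lemmas~\ref{concatenation} and~\ref{walktoray}, and then establish order-reflection by converting witnessing paths to $A$-paths and recovering disjointness --- where the paper makes your ``pigeonhole thinning'' precise by observing that otherwise infinitely many replacement paths would contain a common vertex $t$, which would then have paths of length at most $N$ to infinitely many vertices of the original disjoint paths, contradicting out-local finiteness. The only point you omit is that ends here are $\approx$-classes of rays \emph{and} anti-rays, so both steps must also be run for anti-rays, which the paper dispatches as an analogous argument.
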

\proof For the sake of brevity, we denote $\Gamma_r(\genset{T, s}, A\cup\{s\})$ by $\Gamma$. We  use  
$\preccurlyeq$ to denote the preorder defined above on the rays and anti-rays of $\Gamma$.
We prove the proposition in two steps. The first step is to show that every ray or anti-ray in $\Gamma$ is 
equivalent to a ray or anti-ray with vertices in $T$ and edges labelled by elements of $A$. 
The second step is to show that if  $\r$ and $\r'$ are rays or anti-rays with vertices in $T$, edges labelled by 
elements of $A$, and $\r\preccurlyeq \r'$, then  there exist infinitely many disjoint paths from $\r'$ to $\r$ with 
edges labelled by elements of $A$. So, the first step ensures that every end $\omega$ of  $\Gamma$, contains a ray or anti-ray $\r_{\omega}$ with vertices in $T$ and 
edges labelled 
by elements of $A$. The 
second step implies that the mapping $\Psi:\Omega\Gamma \to  \Omega\Gamma_r(T, A)$ defined so that  
$\Psi(\omega)$ equals the end of $\Omega\Gamma_r(T, A)$ containing $\r_{\omega}$ is an isomorphism. 
We only give the proof of these steps for rays, an analogous argument can be used for anti-rays. 

Let $U=\genset{T, s}\setminus T$, let $n=|U|$, and 
let $\mathbf{r}=(x, xb_1, xb_1b_2, \ldots)$ be a ray in  $\Gamma$ for some 
$b_1, b_2, \ldots\in A\cup \{s\}$ and $x\in \genset{T, s}$. Since  
$U$ is finite, only finitely many elements of $\mathbf{r}$ can lie in $U$, and so we may assume 
without loss of generality that $x, xb_1, xb_1b_2, \ldots\in T$.  If $b_i\not=s$ for all $i$, then there is nothing to prove. 

If $b_k$ is the first occurrence of $s$ in $\{b_1, b_2, \ldots\}$, then since the vertices of $\r$ are distinct so are the elements $b_k, b_kb_{k+1}, \ldots, b_kb_{k+1}\cdots 
b_{k+n}$. 
Hence by Lemma \ref{path_replace} there exist $i, N\in \N$ and $a_1, a_2, \ldots, a_j\in A$ such that $j\leq N$ and $b_kb_{k+1}\cdots b_{k+i}=sb_{k+1}\cdots b_{k+i}
=a_1\cdots a_j$. 
Hence 
$$\mathbf{w}_{0}=(xb_1\cdots b_{k-1}, xb_1\cdots b_{k-1}a_1, \ldots, xb_1\cdots b_{k-1}a_1\cdots a_j)$$
is a walk in $\Gamma$ with vertices in $T$ and edges labelled by elements of $A$. 
We repeatedly apply Lemma \ref{path_replace} to successive occurrences of $s$ in $\{b_1, b_2, \ldots\}$
 to obtain finite walks $\mathbf{w}_1$, $\mathbf{w}_2,\ldots$  with vertices in $T$ and edges labelled by elements of $A$.  
 The length of $\mathbf{w}_{i}$ 
is bounded by $N$ for all $i\in \N$ and the final vertices are distinct, and hence by Lemma \ref{concatenation} every vertex in the sequence
$\mathbf{w}_{0}^\smallfrown\mathbf{w}_{1}^\smallfrown\cdots$ occurs only finitely many times. 
Let $\mathbf{w}$ be the walk obtained by replacing the subpaths of $\r$ by the $\mathbf{w}_i$. 
Every vertex of $\mathbf{w}$  not in some $\mathbf{w}_{i}$ occurs only once, since $\r$ is a ray.  Hence every 
vertex of $\mathbf{w}$ occurs only finitely many times, and 
so by Lemma \ref{walktoray} there  is a subray $\mathbf{r}'$ of $\mathbf{w}$ such that  $\mathbf{r}\approx \mathbf{r}'$, as required.

For the second step of the proof, let $\mathbf{r}$ and $\mathbf{r}'$ be  rays in $\Gamma$ with vertices in $T$, 
edges labelled by 
elements of $A$, and $\r\preccurlyeq \r'$. Since $\r\preccurlyeq \r'$, there exist infinitely many disjoint paths in 
$\Gamma$ from $\r'$ 
to $\r$. We may assume without loss of generality that there are at least $n$ vertices in each of these paths after 
the last occurrence of $s$ as an edge label. Hence, by repeatedly applying Lemma \ref{path_replace}, there 
exists $N\in \N$ and  
infinitely many paths from $\r'$ to $\r$ labelled by elements of $A$. Moreover,  there is a path of length at most 
$N$ from every element in one of the new paths to  some element in the original path it was obtained from by 
applying Lemma \ref{path_replace}. If infinitely many of these new paths are disjoint, then there is 
nothing to prove. Otherwise infinitely many of these paths have non-empty intersection with a finite subset of $T$, 
and so infinitely many paths contain some fixed element $t \in T$.  Hence there are path of length at most $N$ 
from $t$ to infinitely many vertices in the original paths, which contradicts the out-local finiteness of $\Gamma$. 
\qed\vspace{\baselineskip}


\begin{cor}\label{change_gen}
Let $S$ be a finitely generated semigroup and let $A$ and $B$ be any finite generating sets for $S$. Then 
$\Omega\Gamma_r(S,A)$ is isomorphic (as a partially ordered set) to $\Omega\Gamma_r(S,B)$.
\end{cor}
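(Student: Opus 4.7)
My plan is to show that both $\Omega\Gamma_r(S,A)$ and $\Omega\Gamma_r(S,B)$ are isomorphic to $\Omega\Gamma_r(S, A\cup B)$, and then conclude by composing. The entire argument is a repeated application of Proposition \ref{master}, with the elements of $B\setminus A$ adjoined to the generating set one at a time.

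Concretely, enumerate $B\setminus A=\{s_1,s_2,\ldots,s_k\}$ and set $A_0=A$ and $A_i = A\cup\{s_1,\ldots,s_i\}$ for $1\le i\le k$. I proceed by induction on $i$. For the inductive step, apply Proposition \ref{master} with the subsemigroup taken to be $S$ itself, equipped with the finite generating set $A_{i-1}$, and with $s=s_i$. Since $s_i\in S$, we have $\genset{S,s_i}=S$, so $\genset{S,s_i}\setminus S=\emptyset$ is trivially finite and the hypothesis of Proposition \ref{master} holds. The conclusion then yields an isomorphism of posets
\[
\Omega\Gamma_r(S,A_{i-1})\ \cong\ \Omega\Gamma_r(S,A_i).
\]
Composing the chain of $k$ isomorphisms gives $\Omega\Gamma_r(S,A)\cong\Omega\Gamma_r(S,A\cup B)$.

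A symmetric argument, starting from $B$ and adjoining the elements of $A\setminus B$ one at a time, gives $\Omega\Gamma_r(S,B)\cong\Omega\Gamma_r(S,A\cup B)$. Composing these two isomorphisms yields the desired $\Omega\Gamma_r(S,A)\cong\Omega\Gamma_r(S,B)$.

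There is no real obstacle: the substantive content — rewriting rays so that their edges are labelled only by the smaller alphabet using the bounded-length replacements of Lemma \ref{path_replace}, and transporting the preorder $\preccurlyeq$ faithfully across the change — has already been carried out inside the proof of Proposition \ref{master}. All that needs checking at each inductive step is the hypothesis $|\genset{T,s}\setminus T|<\infty$, which is automatic in this setting because $s$ is being taken from $S=T$. Thus the only mild care needed is bookkeeping: enumerating $B\setminus A$, verifying that at each step $A_{i-1}$ is a finite generating set for $S$ so that Proposition \ref{master} applies, and observing that an isomorphism of partially ordered sets is closed under composition.
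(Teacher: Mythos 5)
Your proof is correct and follows essentially the same route as the paper: both reduce to $\Omega\Gamma_r(S,A)\cong\Omega\Gamma_r(S,A\cup\{s\})$ via Proposition \ref{master} applied with $T=S$ (so that $\genset{S,s}\setminus S=\emptyset$ is trivially finite), and then pass through the common generating set $A\cup B$ one element at a time. Your write-up merely makes the induction explicit where the paper compresses it into one sentence.
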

\proof 
It suffices to show that $\Omega\Gamma_r(S,A)$ is isomorphic to $\Omega\Gamma_r(S,A\cup{\{s\}})$ for any $s\in S$, since then 
$\Omega\Gamma_r(S,A)$ is isomorphic to  $\Omega\Gamma_r(S,A\cup B)$ is isomorphic to $\Omega\Gamma_r(S,B)$, as required.  Certainly $S$ is a finitely 
generated 
subsemigroup of $S$ such that $\genset{S, s}\setminus S$ is finite, and so it follows by Proposition \ref{master} that $\Omega\Gamma_r(S,A)$ is isomorphic to $\Omega
\Gamma_r(S,A
\cup{\{s\}})$, as required. 
 \qed\vspace{\baselineskip}


Following from Corollary \ref{change_gen} we define $\Omega S=\Omega\Gamma_{r}(S, A)$ for any finite generating set $A$ of 
$S$.  We  refer to $\Omega S$ as the \emph{ends of $S$}. 

Note that if $S$ is a finitely generated group, then it follows by Hopf's Theorem \cite[Satz II ]{Hopf} that the ends of $S$ form 
an anti-chain with $1$, $2$, or $2^{\aleph_0}$  elements. 

In section 5 we give examples of finitely generated semigroups  with any finite number  or $\aleph_0$ ends 
(Examples \ref{ReesCharact}, \ref{ex:GreenFail}). Any group with $2^{\aleph_0}$ group ends will also have $2^{\aleph_0}$ ends as a semigroup. It is easy to see that the free 
monoid on two generators will have $2^{\aleph_0}$ ends as all pairs of rays are incomparable. It is not 
known whether, in the absence of the Continuum Hypothesis,  there exists a finitely generated semigroup 
$S$ such that $\Omega S$ has $\kappa$ elements where $\aleph_0<\kappa< 2^{\aleph_0}$. The  question of which posets can occur as the partial order of ends $\Omega S$ of some finitely generated 
semigroup $S$ is unresolved. 

If $S$ is a semigroup and $T$ is a subsemigroup of $S$, then the \emph{Rees index} of $T$ in $S$ is just 
$|S\setminus T|+1$.  


\begin{cor}\label{th:Rees}
Let $S$ be a finitely generated semigroup and let $T$ be a subsemigroup of $S$ of finite Rees
index. Then the partial order $\Omega S$ of the ends of $S$  is isomorphic to the partial order $\Omega T$ of the ends of $T$. 
\end{cor}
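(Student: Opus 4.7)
The plan is to reduce the corollary to an iterated application of Proposition \ref{master}, adjoining the finitely many elements of $S\setminus T$ one at a time. Since Proposition \ref{master} requires the starting subsemigroup to be finitely generated, the first thing I would check is that $T$ is itself finitely generated. This is the well-known theorem of Jura on subsemigroups of finite Rees index in finitely generated semigroups, and I would invoke it to produce a finite generating set $A$ for $T$.

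Next, enumerate $S\setminus T = \{s_1, s_2, \ldots, s_k\}$ and define the chain of subsemigroups
\[
T_0 = T, \quad T_i = \genset{T_{i-1}, s_i} \quad (1 \le i \le k),
\]
with generating sets $A_0 = A$ and $A_i = A_{i-1} \cup \{s_i\}$. Clearly $T_k \supseteq T \cup \{s_1, \ldots, s_k\} = S$, so $T_k = S$ and $A_k = A \cup (S\setminus T)$ is a finite generating set of $S$. At each stage, since $T_{i-1} \supseteq T$ we have
\[
\genset{T_{i-1}, s_i}\setminus T_{i-1} \subseteq S\setminus T_{i-1} \subseteq S\setminus T,
\]
which is finite. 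The hypotheses of Proposition \ref{master} are therefore satisfied at every step, giving an isomorphism of partially ordered sets
\[
\Omega\Gamma_r(T_{i-1}, A_{i-1}) \;\cong\; \Omega\Gamma_r(T_i, A_i).
\]

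Composing these $k$ isomorphisms yields $\Omega\Gamma_r(T, A) \cong \Omega\Gamma_r(S, A_k)$. Finally, by Corollary \ref{change_gen} the right-hand side is isomorphic to $\Omega S$ (computed from any other finite generating set of $S$), while the left-hand side is $\Omega T$ by definition. This gives $\Omega T \cong \Omega S$ as required.

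The only genuine obstacle is the initial finite-generation step for $T$: without it, Proposition \ref{master} cannot even be stated for the base case. Once Jura's theorem is cited, the rest is a purely mechanical induction on $|S\setminus T|$, with every intermediate subsemigroup $T_i$ automatically inheriting a finite generating set from the previous stage and the "finite difference" hypothesis being preserved because $S\setminus T$ is a finite envelope for all the differences that arise.
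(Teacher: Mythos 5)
Your proof is correct and follows essentially the same route as the paper: cite the finite generation of $T$ (Jura's theorem, which the paper quotes as \cite[Theorem 1.1]{NikRees}), then adjoin the elements of $S\setminus T$ one at a time, applying Proposition \ref{master} at each step because every intermediate difference is contained in the finite set $S\setminus T$. Your explicit appeal to Corollary \ref{change_gen} at the final stage is a small tidiness the paper leaves implicit, but the argument is the same.
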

\proof 
Since $S$ is finitely generated, it follows by \cite[Theorem 1.1]{NikRees}, that $T$ is finitely generated. Let $A$ be any finite generating set for $T$ and let 
$s\in S\setminus T$ be arbitrary. 
Then  $\genset{T, s}\setminus T\subseteq S\setminus T$ and so   $\genset{T, s}\setminus T$ is finite. It follows from Proposition 
\ref{master} that $\Omega\Gamma_r(T, A)$ is isomorphic $\Omega\Gamma_r(\genset{T, s}, A\cup\{s\})$, and hence $\Omega T$ is 
isomorphic to $\Omega\genset{T, s}$.  Since $T\lneq \genset{T, s}\leq S$, by repeating this process (at most $|S\setminus T|$ times) 
we have shown that $\Omega S$ is isomorphic to $\Omega T$. \qed


\section{The number of ends of a left cancellative semigroup} \label{left_cancel_sect}
In this section we prove that left cancellative semigroups can only have a restricted number of ends, unlike the 
general case (See Proposition \ref{ReesCharact}). 

 A semigroup $S$ is \emph{left cancellative} if  $x=y$ whenever $ax=ay$ where $a,x,y\in S$. \emph{Right 
 cancellative} is defined analogously.  A semigroup is \emph{cancellative} if it is both left and right cancellative. 

A left or right cancellative monoid contains only one idempotent (the identity). A left cancellative semigroup 
contains at most one idempotent in every $\lrel$-class, and the analogous statement holds for right cancellative 
semigroups.    The structure of a cancellative semigroup $S$ is straightforward to describe: either $S$ is $\rrel$-
trivial or $S$ is a monoid with group of units $G$,  every  $\rrel$-class is of the form $xG$, and every $\lrel$-class 
is of the form $Gx$ for some $x\in S$ (see for example \cite{ReesStructure}). We start this section by giving an analogous 
description of the structure of a left  cancellative semigroup. It is possible to deduce these results from \cite{LeftCancSemi} although they are not couched in this notation, and so we 
include a proof for the sake of completeness. 


A \emph{right group} is the direct product of a group $G$ and right zero semigroup $E$.

\begin{thm}\cite[Theorem 1.27]{C+P}\label{RightGroupLCRSimp}
A semigroup is a right group if and only if it is left cancellative and $\rrel$-simple.
\end{thm}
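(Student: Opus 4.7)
The plan is to handle the two implications in turn; the reverse direction is a direct verification from the definitions, while the forward direction carries all of the structural content. For $(\Leftarrow)$, if $S = G \times E$ with $E$ a right-zero band, the multiplication rule $(g_1,e_1)(g_2,e_2) = (g_1 g_2, e_2)$ reduces both properties to routine checks: left cancellation follows by reading off $f = f'$ from the second coordinate of $(g,e)(h,f)=(g,e)(h',f')$ and then cancelling in $G$; $\rrel$-simplicity is witnessed by the explicit equation $(g,e)(g^{-1}h, f) = (h,f)$, valid for any two elements.

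For $(\Rightarrow)$, assume $S$ is left cancellative and $\rrel$-simple, so $aS^1 = S$ for every $a \in S$. The first step is to extract idempotents. For each $a \in S$ one can find $e_a \in S$ with $a e_a = a$: when $|S|\geq 2$, pick $b \neq a$ and use $\rrel$-simplicity to write $b = ax$ and $a = by$, so $a = a(xy)$; the case $|S|=1$ is trivial. The identity $a(e_a x) = ax$ combined with left cancellation forces $e_a x = x$ for all $x$, so $e_a$ is a left identity of $S$, and in particular an idempotent. Let $E$ denote the set of all idempotents; since every $e \in E$ is a left identity, $ef = f$ for all $e, f \in E$, so $E$ is a right-zero band.

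The main obstacle is to build, for each $e \in E$, a group $G_e$ with identity $e$ sitting inside $S$. I would set $G_e := \{a \in S : ae = a\}$; closure is immediate from $(ab)e = a(be) = ab$, and $e$ is then a two-sided identity on $G_e$. Right inverses come from $\rrel$-simplicity: choose $b \in S$ with $ab = e$ and replace it by $be$ to put the inverse inside $G_e$ (the product $ab = e$ is preserved). For left inverses I would compute $(ba)(ba) = b(ab)a = bea = ba$, so $ba$ is an idempotent of $G_e$; but any idempotent of $G_e$ lies in $E$, and $(ba)e = e$ by the right-zero rule while $(ba)e = ba$ by definition of $G_e$, forcing $ba = e$.

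Finally I would assemble everything into the product decomposition. The family $\{G_e : e \in E\}$ partitions $S$: existence is the first step, and disjointness follows from the chain $f = ef = (ba)f = b(af) = ba = e$ whenever $a \in G_e \cap G_f$ with $b \in G_e$ a two-sided inverse of $a$. For any two $e, f \in E$ the map $a \mapsto af$ is an isomorphism $G_e \to G_f$ with inverse $b \mapsto be$, using that every element of $E$ is a left identity (for the homomorphism property) and that $fe = e$, $ef = f$ (for the mutual inverse relation). Fixing $e_0 \in E$ and writing $G := G_{e_0}$, the map $\Phi : G \times E \to S$ defined by $\Phi(g, f) = gf$ is then a bijection, and a short computation using $f_1$ being a left identity gives $(g_1 f_1)(g_2 f_2) = g_1 g_2 f_2$, which is precisely the statement that $\Phi$ is a homomorphism. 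Hence $S \cong G \times E$ and $S$ is a right group, as required.
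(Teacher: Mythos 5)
Your proof is correct. Note that the paper does not prove this statement at all: it is quoted verbatim from Clifford and Preston \cite[Theorem 1.27]{C+P}, so there is no in-paper argument to compare against. Your self-contained argument follows the classical route: left cancellativity turns every idempotent into a left identity (via $e(ex)=ex$, which is the one-line justification you should make explicit when you assert that \emph{every} element of $E$ is a left identity, not just the constructed $e_a$), the idempotents form a right-zero band, the sets $G_e=\{a : ae=a\}$ are groups partitioning $S$, and $(g,f)\mapsto gf$ gives the isomorphism $G\times E\cong S$. All the key verifications --- the existence of $e_a$ from $\rrel$-simplicity, the idempotency of $ba$ forcing $ba=e$, the disjointness computation $f=ef=(ba)f=b(af)=ba=e$, and the homomorphism identity $(g_1f_1)(g_2f_2)=g_1g_2f_2$ --- are sound.
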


\begin{prop}\label{left_cancel}
Let $S$ be a left cancellative semigroup and let $U$ be the set of regular elements in $S$. Then:
\begin{enumerate}
\item[\rm (i)] $S\setminus U$ is an ideal (in the case when $S$ is a group it is empty);
\item[\rm (ii)] if $U$ is non-empty, then $U$ is a right group;
\item[\rm (iii)]  if $x\in S$ has non-trivial $\rrel$-class $R_x$, then $R_x=xU$;
\item[\rm (iv)] if $x\in S$ is arbitrary and $U$ is non-empty, then $xU$ is an $\rrel$-class of $S$ (not necessarily containing $x$).
\end{enumerate}
\end{prop}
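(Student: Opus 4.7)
The plan rests on one crucial consequence of left cancellation: if $e = e^2$ then $e(ex) = e^2 x = ex$, so left cancellation of $e$ gives $ex = x$ for every $x \in S$. Thus every idempotent of $S$ is a left identity for the whole semigroup, a fact I will use repeatedly. It implies at once that $E(S)$ is a right-zero band (since $ef = f$ for $e,f \in E(S)$), that each idempotent $e$ satisfies $eS = S$ and hence $eS^1 = S$, and therefore that all idempotents lie in a single $\rrel$-class of $S$. Since $x$ is regular iff $R_x$ meets $E(S)$, whenever $U$ is non-empty it coincides with this single $\rrel$-class.

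For part (i) I may assume $U \ne \emptyset$. If $sx \in U$ with $(sx)y(sx)=sx$, left cancelling $s$ from $s(xysx)=sx$ yields $x(ys)x=x$, so $x \in U$. If $xs \in U$ with $(xs)y(xs)=xs$, then $f := xsy$ is idempotent since $f^2 = (xsyxs)y = (xs)y = f$; the key observation makes $f$ a left identity, so $fx = x$, i.e.\ $x(sy)x = xsyx = x$, giving $x \in U$. Part (ii) then follows from Theorem~\ref{RightGroupLCRSimp}: $U$ inherits left cancellation from $S$; closure of $U$ under multiplication follows from $xS = S$ for $x \in U$ (which holds because $x = xax \in xS$), which forces $xy \,\rrel\, x$ in $S$ and so $xy \in U$; and for $\rrel$-simplicity of $U$, given $x, y \in U$ I write $y = xs$ in $S^1$, observe that part (i) forces $s \in U$, and conclude $y \in xU$, with the symmetric statement giving $xU^1 = yU^1$ in $U$.

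For part (iii), non-triviality of $R_x$ produces $s, t \in S$ with $y = xs \ne x$ and $x = xst$; setting $u := st$, the identity $xu^2 = xu$ combined with left cancellation of $x$ gives $u^2 = u$, so $u \in U$ and $xu = x$. The inclusion $xU \subseteq R_x$ reduces to computing $(xv)S^1 = x(vS^1) = xS = xS^1$ for $v \in U$, using $x = xu \in xS$. For $R_x \subseteq xU$: the case $y' = x$ follows from $x = xu$, and if $y' \ne x$ I write $y' = xs'$, $x = y't'$; then $xs't's' = xs'$, and left cancelling $x$ gives $s't's' = s'$, i.e.\ $s'$ is regular with quasi-inverse $t'$, so $s' \in U$ and $y' \in xU$. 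Part (iv) runs the same template at $xe$, where $e$ is any idempotent in $U$: $xU \subseteq R_{xe}$ is again a calculation with $(xv)S^1 = xS = xeS^1$; for the reverse inclusion, given $y \in R_{xe}$ I write $y = xep$ and $xe = yq$, use $ep = p$ to see $y = xp$, and left cancel $x$ from $xe = xepq$ to obtain $e = pq$, so that $pqp = ep = p$ shows $p \in U$ and $y \in xU$.

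The main obstacle throughout is the asymmetry of left cancellation: equations of the form $c\alpha = c\beta$ simplify, but $\alpha c = \beta c$ do not. Consequently every ``right-hand'' argument --- the right-ideal direction of (i), and the reverse inclusions in (iii) and (iv) --- must be routed through an idempotent, exploiting the key observation that forces idempotents to act as left identities. The real trick in each case is spotting which product is idempotent: $xsy$ for (i), $st$ for (iii), and $pq$ (which emerges only after left cancelling $x$) for (iv).
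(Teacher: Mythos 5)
Your proof is correct and follows essentially the same route as the paper's: left cancellation forces idempotents to act as left identities (the paper's $e^2f=ef$ computation), the regular elements form a single $\rrel$-class, the complement is an ideal by the same cancellation manipulations, part (ii) reduces to Theorem~\ref{RightGroupLCRSimp}, and parts (iii)--(iv) hinge on exactly the paper's key observation that the connecting products ($st$, resp.\ $pq$) are idempotent after cancelling $x$. The only cosmetic difference is that you front-load the left-identity property and phrase the $\rrel$-class comparisons via $xS^1=S$ where the paper argues element by element.
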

\proof 
Let $x,y \in S$ and assume that $xy$ is a regular element. It follows that there exists $z\in S$ such that $xyzxy=xy$. By cancelling we see that $yzxy=y$ and hence $y
$ must be a regular element. From $yzxy=y$ we must have $yzxyzx=yzx$, again by cancelling we see $xyzx=x$ and hence $x$ is also a regular element. This means 
that $S\setminus U$ is an ideal of $S$.

For the second part we assume that $U$ is non-empty. If $S$ contains a regular element then it contains an idempotent. Let $e$ and $f$ be idempotents in $S$. Then 
$e^2f=ef$ and $f^2e=fe$ and so, by cancelling,  $ef=f$ and $fe=e$. Thus $e\rrel f$ and, since every regular element is 
 $\rrel$-related to an idempotent,  the regular elements of $S$ are contained in a single $\rrel$-class of $S$. If $x$ and $y$ are regular, then there exists $x'\in S$ such 
that $xx'x=x$ and, since $y\rrel x'$, there exists $z\in S$ such that $yz=x'$. Hence $xyzxy=xx'xy=xy$ and so $xy$ is regular. Hence, $U$ is a subsemigroup of $S$ and 
by part one $S\setminus U$ is an ideal and so $U$ is $\rrel$-simple. It follows from Theorem \ref{RightGroupLCRSimp} that $U$ is a right group

We now proof parts three and four together. Let $x\in S$ and assume $U$ is non-empty. Let $e\in U$ be an idempotent. Then as all elements in $U$ are $\rrel$ related 
$xU$ is contained within an $\rrel$-class, say $R$. Let $y$ be an element of $R$ distinct from $xe$. Then there exists $s,t \in S$ such that $xes=y$ and $yt=xe$. It 
follows that $xest=xe$ and hence $xestst=xest$. By cancelling $(st)^2=st$ is an idempotent and as $S\setminus U$ is an ideal $s,t\in U$. This means that $R=xU$. If 
$x$ lies in a non-trivial $\rrel$-class then there exists $y\rrel x$ such that $y\neq x$ and there exists $s,t \in S$ such that $xs=y$ and $yt=x$. Then as before we see that 
$st$ is an idempotent and $x=xst$ so $x\in xU$.
\qed

\begin{lem}\label{1_infty}
A left cancellative semigroup $S$ has either one $\rrel$-class or infinitely many $\rrel$-classes. 
\end{lem}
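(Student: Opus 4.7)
The plan is to apply the structural Proposition \ref{left_cancel} and split on whether the set $U$ of regular elements of $S$ is empty.

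If $U=\emptyset$, then Proposition \ref{left_cancel}(iii) forces every $\mathcal{R}$-class of $S$ to be a singleton, since otherwise some $R_x$ would coincide with the empty set $xU$. It then suffices to show $S$ is infinite, and for this I would verify the standard fact that every finite left cancellative semigroup contains an idempotent: for any $a\in S$ finiteness gives a relation $a^m=a^n$ with $1\leq m<n$, and then a single application of left cancellation to the identity $a^n\cdot a^{n-m}=a^n\cdot a^{2(n-m)}$ yields $(a^{n-m})^2=a^{n-m}$. A finite $S$ would therefore have a regular element, contradicting $U=\emptyset$; so $S$ is infinite and hence has infinitely many $\mathcal{R}$-classes.

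If $U\neq\emptyset$ and $S=U$, then the proof of Proposition \ref{left_cancel}(ii) already shows that $U$ is $\mathcal{R}$-simple, so $S$ has exactly one $\mathcal{R}$-class.

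The substantive case is $U\neq\emptyset$ with $S\neq U$; here I would fix $y\in S\setminus U$ and show that the powers $y,y^2,y^3,\ldots$ represent pairwise distinct $\mathcal{R}$-classes. Suppose for contradiction that $y^i\mathrel{\mathcal{R}}y^j$ for some $1\leq i<j$; then either $y^i=y^j$, or $y^i=y^jv$ for some $v\in S$. In the first subcase, $y^i=y^iy^{j-i}$, and right-multiplying by $y^{j-i}$ and left-cancelling $y^i$ yields $(y^{j-i})^2=y^{j-i}$. In the second, setting $w:=y^{j-i}v$ gives $y^i=y^iw$, and the same manoeuvre gives $w^2=w$. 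Either way one has exhibited an idempotent inside $(S\setminus U)\,S^1\subseteq S\setminus U$, the containment holding because $S\setminus U$ is an ideal by Proposition \ref{left_cancel}(i); but idempotents are regular, so this element must also lie in $U$, a contradiction.

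The main obstacle is the bookkeeping in this last case: one has to separate the possibility that the $\mathcal{R}$-witness in $S^1$ is the adjoined identity from the possibility that it is a genuine element of $S$, and then apply left cancellation exactly once to the correct equation so that the surviving idempotent is visibly a product involving $y^{j-i}$ and is therefore trapped inside the ideal $S\setminus U$.
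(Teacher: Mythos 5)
Your proof is correct, and it is organized differently from the paper's. The paper's proof runs a dichotomy on whether $(x,x^2)\in\mathcal{R}$ for every $x\in S$: if so, a left-identity computation (from $x^2s=x$ one cancels to get $(xs)t=t$ for all $t$, so $xs$ is an idempotent left identity and $x\mathrel{\mathcal{R}}xs$) shows $S$ is regular and hence has a single $\mathcal{R}$-class; if not, the paper simply asserts that the powers of a witness $x$ lie in pairwise distinct $\mathcal{R}$-classes. You instead split on the set $U$ of regular elements via Proposition \ref{left_cancel}, and your third case supplies, in full, exactly the computation the paper leaves implicit: from $y^i\mathrel{\mathcal{R}}y^j$ with $y\notin U$ you manufacture an idempotent ($y^{j-i}$ or $y^{j-i}v$) that is trapped in the ideal $S\setminus U$ yet, being idempotent, is regular --- a contradiction. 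All the individual steps check out (the cancellations, the use of $S^1$ in witnessing $\mathcal{R}$, and the finite-case idempotent argument). Two minor observations: your first case ($U=\emptyset$) is actually subsumed by the same power computation, since the idempotent it produces already contradicts $U=\emptyset$ directly, so the detour through ``every finite left cancellative semigroup has an idempotent'' is dispensable; and in your second case the statement that a right group is $\mathcal{R}$-simple is what the paper records as Theorem \ref{RightGroupLCRSimp}, so that step is legitimately available. Neither point affects correctness; if anything your version is the more self-contained of the two.
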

\proof
We show that either $S$ is regular or  $(x, x^2)\not\in \rrel$ for some $x\in S$. Suppose that $(x,x^2)\in\rrel$ for all $x\in S$. Then there exists $s\in S^1$ such that 
$x^2s=x$. Hence $x^2st=xt$ and so $(xs)t=t$ for all $t\in S$. Hence $xs$ is a left identity for $S$ and so $xs$ is an idempotent and $x\rrel xs$. Thus $S$ is regular and 
so by \cite[Exercise 1.11.4]{C+P}  has only one $\rrel$-class. 
If there exists $x\in S$ such that $(x, x^2)\not\in \rrel$, then $(x^i, x^j)\not\in\rrel$ for all $i,j\in \mathbb{N}$ such that $i\not=j$. Hence $S$ has infinitely many 
$\rrel$-classes.  \qed


\begin{cor}\label{infinf}
If $S$ has infinitely many $\rrel$-classes at least one of which is infinite, then it has infinitely  many infinite $\rrel$-classes.
\end{cor}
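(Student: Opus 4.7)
The plan is to consider the union $T$ of all infinite $\rrel$-classes of $S$ and reduce the problem to Lemma \ref{1_infty} applied to $T$. Let $U$ denote the set of regular elements of $S$. The hypothesis provides some infinite $\rrel$-class, which by Proposition \ref{left_cancel}(iii) has the form $xU$; since left translation by $x$ is injective, $|U|$ must be infinite, so in particular $U$ is non-empty.

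The next step is to show that $T$ is a subsemigroup and that the $\rrel$-classes of $T$ are precisely the infinite $\rrel$-classes of $S$. I will use the characterisation: $x \in T$ if and only if $x = xe$ for some idempotent $e \in U$. The forward direction combines Proposition \ref{left_cancel}(iii) (placing $x \in xU$, so $x = xu$ for some $u \in U$) with left cancellation applied to $xu^2 = xu$ to force $u = u^2$; the backward direction uses Proposition \ref{left_cancel}(iv) to see that $xU$ is the $\rrel$-class of $x$, which has cardinality $|U|$ and is therefore infinite. Given $x, y \in T$ and $y = ye$, we have $(xy)e = x(ye) = xy$, so $xy \in T$ and $T$ is a subsemigroup. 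An analogous left-cancellation argument shows that whenever $x, y \in T$ and $x = ys$ for some $s \in S$, the element $s$ itself satisfies $se = s$ and therefore lies in $T$. Consequently any $\rrel$-equivalence in $S$ between two elements of $T$ is witnessed inside $T$, and the $\rrel$-classes of $T$ are exactly the infinite $\rrel$-classes of $S$.

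Now apply Lemma \ref{1_infty} to the left cancellative semigroup $T$: it has either one or infinitely many $\rrel$-classes. The latter gives the desired conclusion, so it remains to rule out the former. Suppose for contradiction that $T$ has a single $\rrel$-class. Since $U$ is a right group by Proposition \ref{left_cancel}(ii), it is $\rrel$-simple by Theorem \ref{RightGroupLCRSimp}, so $xU = U$ for every $x \in U$; together with Proposition \ref{left_cancel}(iv), this shows $U$ is itself an infinite $\rrel$-class of $S$. Hence $U \subseteq T$, and since both are $\rrel$-classes of $S$ we get $T = U$. However $S$ has infinitely many $\rrel$-classes, so $S \neq U$; picking any $x \in S \setminus U$, Proposition \ref{left_cancel}(iv) gives that $xU$ is an $\rrel$-class of $S$ of size $|U| = \infty$, forcing $xU = U$. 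But $S \setminus U$ is an ideal by Proposition \ref{left_cancel}(i), so $xU \subseteq S \setminus U$, contradicting $xU = U$.

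The main obstacle is the first reduction: showing that the $\rrel$-classes of $T$ coincide with the infinite $\rrel$-classes of $S$ relies on a careful use of left cancellation together with the idempotent characterisation of membership in $T$. Once this is in place, Lemma \ref{1_infty} and the ideal structure of $S \setminus U$ close the argument cleanly.
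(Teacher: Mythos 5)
Your proof is correct, but it takes a genuinely different route from the paper's. The paper argues directly: it reaches into the \emph{proof} of Lemma \ref{1_infty} to extract an element $x$ with $(x^i,x^j)\notin\rrel$ for $i\neq j$, notes that each $x^iU$ is an infinite $\rrel$-class by Proposition \ref{left_cancel}(iv) and left cancellativity, and shows these classes are pairwise disjoint (if $x^iU\cap x^jU\neq\emptyset$ then $x^{j-i}\in U$ because $S\setminus U$ is an ideal, forcing infinitely many powers of $x$ to be $\rrel$-related). You instead form the union $T$ of all infinite $\rrel$-classes, characterise membership by $x=xe$ for an idempotent $e\in U$, show $T$ is a left cancellative subsemigroup whose $\rrel$-classes are exactly the infinite $\rrel$-classes of $S$, and then apply Lemma \ref{1_infty} to $T$ as a black box, eliminating the one-class alternative via the ideal structure of $S\setminus U$. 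Your reduction is more structural and has the virtue of using only the \emph{statement} of Lemma \ref{1_infty} rather than an element produced inside its proof, at the cost of a longer verification (the subsemigroup property of $T$ and the coincidence of $\rrel^T$ with the restriction of $\rrel^S$); the paper's argument is shorter and has the added benefit of explicitly exhibiting the infinitely many infinite classes as the sets $x^iU$. All the steps you rely on (the idempotency of $u$ from $xu=xu^2$, the transfer $s=se$ for a witness $s$ of an $\rrel$-relation between elements of $T$, and the final contradiction $xU=U\subseteq S\setminus U$) check out.
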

\proof  Since there is at least one infinite $\rrel$-class in $S$,  that $\rrel$-class is of the form $yU$ for some $y\in S$, and $|yU|=|U|$ by left cancellativity, it follows that 
$U$ is infinite. 
From the proof of Lemma \ref{1_infty}, there exists $x\in S$ such that $(x^i, x^j)\not\in\rrel$ for all $i,j\in \mathbb{N}$ such that $i\not=j$. By Proposition \ref{left_cancel}, 
$x^iU$ is an $
\rrel$-class of $S$ for all $i\in \N$ and $|x^iU|=|U|$ and, in particular, $x^iU$ is infinite for all $i\in \N$. It suffices to show that the sets $x^iU$ are disjoint. Suppose to the 
contrary that 
$x^iU\cap x^jU\not=\emptyset$ for some $i,j\in \N$ with $i<j$. Then, by left cancellativity, $x^{j-i}U\cap U\not=\emptyset$ and so $x^{j-i}\in U$ since $S\setminus U$ is 
an ideal. 
Therefore infinitely many powers of $x$, namely, $x^{j-i}, x^{2j-2i}, \ldots$, are $\rrel$-related, contradicting our assumption.
\qed\vspace{\baselineskip}



Right groups are a special case of Rees matrix semigroups where $|I|=1$ and the multiplication matrix $P$ consists of identity elements. Hence as a corollary to 
Proposition \ref{ReesCharact} below we have.

\begin{cor}\label{right_group}
Let $G$ be a finitely generated group and let $E$ be a finite right zero semigroup. Then $|\Omega(G\times E)|=|\Omega G|$.
\end{cor}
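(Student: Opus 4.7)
The plan is to set up a bijection between $\Omega(G\times E)$ and $\Omega G$ via the coordinate projection $\pi\colon G\times E\to G$, $\pi(g,e)=g$. If $G$ is finite then $G\times E$ is also finite, so both end sets are empty and the claim is trivial; I therefore assume $G$ is infinite. Let $A_0$ be a finite generating set for the group $G$, and put $A:=A_0\cup A_0^{-1}\cup\{1_G\}$ and $B:=A\times E$. These are finite semigroup generating sets for $G$ and $G\times E$ respectively, and by Corollary \ref{change_gen} I am free to work with them. The labels $(1_G,e')\in B$ yield edges $((g,e),(g,e'))$ in $\Gamma_r(G\times E,B)$, so each fibre $\{g\}\times E$ is a complete digraph traversable in a single step. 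The projection $\pi$ sends each edge $((g,e),(ga,e'))$ to the edge $(g,ga)$, making it a graph homomorphism $\Gamma_r(G\times E,B)\to\Gamma_r(G,A)$ all of whose fibres have size~$|E|$.

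Given a ray $\r=((g_0,e_0),(g_1,e_1),\ldots)$ in $\Gamma_r(G\times E,B)$, the sequence $(g_0,g_1,\ldots)$ is an infinite walk in $\Gamma_r(G,A)$ in which each vertex appears at most $|E|$ times (the vertices of $\r$ are distinct and $|\pi^{-1}(g)|=|E|$). Lemma \ref{walktoray} therefore extracts a ray $\tilde{\r}$ in $\Gamma_r(G,A)$, and I set $\Psi([\r]):=[\tilde{\r}]$. Surjectivity of $\Psi$ is immediate: any ray in $\Gamma_r(G,A)$ lifts to a ray in $\Gamma_r(G\times E,B)$ by arbitrary choice of second coordinates. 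Well-definedness and injectivity both reduce to the same transfer of disjoint path families. A family of infinitely many pairwise-disjoint paths in $\Gamma_r(G\times E,B)$ projects to a family of walks in $\Gamma_r(G,A)$, each containing a path, in which every $g\in G$ appears in at most $|E|$ members (since the lifts are disjoint and $g$ has only $|E|$ preimages); a greedy extraction then yields infinitely many disjoint paths in $\Gamma_r(G,A)$. Conversely, infinitely many disjoint paths in $\Gamma_r(G,A)$ between $\tilde{\r}$ and $\tilde{\r}'$ lift to pairwise-disjoint paths in $\Gamma_r(G\times E,B)$, where the fibre-completing edges permit me to steer each lift so that it begins on the prescribed vertex of $\r$ and ends on the prescribed vertex of $\r'$.

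The main obstacle will be this steering: a lift of a path from $g$ to $g'$ naturally lands in the fibre over $g'$, but not necessarily at the specific vertex of $\r'$ in that fibre, and for length-zero paths in $G$ (corresponding to vertices of $\tilde{\r}\cap\tilde{\r}'$) there is no movement whatsoever. The extra generators $(1_G,e')$ in $B$ resolve this by providing single-step fibre edges to prepend and append as corrections, without breaking disjointness (since projections remain disjoint in $G$, and fibre edges at distinct $g$-values share no vertex). A secondary routine point is the greedy extraction argument for disjoint $G$-paths under the bounded-multiplicity condition; this succeeds because each individual path has finitely many vertices and so conflicts with only finitely many others.
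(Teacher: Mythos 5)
Your proof is correct in substance, but it takes a different route from the paper: the paper obtains this corollary as the special case $|I|=1$, $P$ trivial, of Proposition \ref{ReesCharact} on Rees matrix semigroups, whereas you give a direct argument via the projection $\pi\colon G\times E\to G$. The underlying toolkit is the same — extract a ray from a finite-multiplicity walk via Lemma \ref{walktoray}, and disjointify families of paths using out-local finiteness and the bound $|E|$ on fibre sizes — but your choices simplify the bookkeeping in two ways that the general proposition cannot afford: taking $B=A\times E$ with $A$ symmetric and containing $1_G$ makes every fibre a complete digraph (so "steering" a lift onto a prescribed vertex of $\r'$ costs one edge, and in fact you can simply choose the second coordinate of the final edge of each lift), and it makes the edge relation symmetric in both Cayley graphs, so anti-rays coincide with rays and the anti-ray case, which the paper dispatches with "the proof is analogous", disappears entirely — a point worth stating explicitly since $\Omega\Gamma$ is defined on classes of rays \emph{and} anti-rays. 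What the paper's route buys is the stronger, more general statement (the full anti-chain structure of $\Omega\mathcal{M}[G;I,\Lambda;P]$); what yours buys is a shorter self-contained proof of exactly what is needed. The one step you should expand is the downward transfer: your greedy extraction produces infinitely many disjoint paths in $\Gamma_r(G,A)$ from vertices of the projected walk of $\r$ to vertices of the projected walk of $\r'$, not yet from $\tilde{\r}$ to $\tilde{\r}'$; you must concatenate these with the connecting paths supplied by Lemma \ref{walktoray} (which are segments of the respective walks, hence themselves of multiplicity at most $|E|$ at each vertex of $G$) and then run the same greedy disjointification once more on the concatenated walks. This is routine given your bounded-multiplicity observation, and is no worse than the glossing the paper itself does in Proposition \ref{master}, but as written the sentence "well-definedness and injectivity both reduce to the same transfer" hides it.
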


%
%


\begin{lem}\label{noname}
Let $S$ be a finitely generated left cancellative semigroup with no infinite $\rrel$-classes. If the Cayley graph of $S$ with respect to any finite generating set 
contains a ray $\r$ and there is an $s\in S$ such that there are paths from infinitely many points in $\r$ to $s$, then $\Omega S$ is infinite. 
\end{lem}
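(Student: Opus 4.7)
The plan is to produce an infinite family of pairwise inequivalent rays $\r_0,\r_1,\r_2,\ldots$, where $\r_n:=(s^nv_0,s^nv_1,\ldots)$ for $n\geq 1$ and $\r_0:=\r$. As a preliminary step I would argue that $v_j\geq_{\rrel} s$ for every $j\in\N$: the ray $\r$ is $\rrel$-descending, since $v_{j+1}=v_ja_j\in v_jS$ gives $v_{j+1}\leq_{\rrel}v_j$, so the hypothesis that $v_{i_k}\geq_{\rrel}s$ for infinitely many $k$ propagates back along the ray. Combined with the assumption that $S$ has no infinite $\rrel$-classes, this implies that the $\rrel$-class $R_s$ of $s$ is finite.

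Next I would check that each $\r_n$ is a ray in the right Cayley graph: left cancellativity makes the vertices pairwise distinct, and $(s^nv_j,s^nv_{j+1})$ is the edge labelled $a_j\in A$, obtained from $(v_j,v_{j+1})$ by left-multiplication with $s^n$. The central claim is that $\r_n\not\preccurlyeq\r_m$ whenever $n>m\geq 0$, from which it follows that $\r_0,\r_1,\ldots$ lie in pairwise distinct ends. To establish it, I would observe that any disjoint path (of any length, including $0$) from $\r_n$ to $\r_m$ supplies vertices $s^nv_j\in\r_n$ and $s^mv_{j'}\in\r_m$ together with an element $x\in S^1$ satisfying $s^mv_{j'}=s^nv_jx$. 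For $m\geq 1$, writing $s^n=s^m\cdot s^{n-m}$ and left-cancelling $s^m$ yields $v_{j'}=s^{n-m}v_jx$; for $m=0$ this is immediate. In either case $v_{j'}\in sS^1$, so $v_{j'}\leq_{\rrel}s$, which combined with $v_{j'}\geq_{\rrel}s$ forces $v_{j'}\in R_s$. Since disjoint paths have pairwise distinct endpoints in $\r_m$ and $R_s$ is finite, only finitely many such paths can exist, contradicting $\r_n\preccurlyeq\r_m$.

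The main calculation is this left-cancellation step, which converts a path between two translates into an $\rrel$-constraint on the target vertex. The main bookkeeping subtlety to address is ensuring that the translates themselves are pairwise distinct rays: a coincidence $\r_n=\r_m$ for some $n>m$ would mean that for every $j$ there is $j'$ with $s^nv_j=s^mv_{j'}$, and the same derivation would force every such $v_{j'}$ into the finite set $R_s$; since $j\mapsto j'$ is injective by left cancellation, this contradicts that $\r$ has infinitely many distinct vertices. Hence $\r_0,\r_1,\r_2,\ldots$ represent infinitely many pairwise distinct ends of $S$, so $\Omega S$ is infinite.
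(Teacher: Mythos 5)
Your proof is correct, and it rests on the same central construction as the paper's: the family of left-translated rays $s^n\r$, which are rays because $S$ is left cancellative. The two arguments differ in how inequivalence is extracted. The paper works by contradiction: if $S$ had finitely many ends, two translates would satisfy $\r_i\approx\r_j$, left cancellation would give $\r_{j-i}\approx\r$, and the resulting path from $s$ into $\r$, combined with the hypothesised paths from infinitely many vertices of $\r$ back to $s$, would manufacture an infinite $\rrel$-class. You instead prove directly that $\r_n\not\preccurlyeq\r_m$ for all $n>m\geq 0$: any connecting path terminates at a vertex $s^m v_{j'}$ with $v_{j'}\in sS^1$, while the descending $\rrel$-order along $\r$ together with the paths from infinitely many $v_{i_k}$ to $s$ forces $s\in v_{j'}S^1$ as well, so $v_{j'}$ lies in the finite class $R_s$ and the termini are confined to the finite set $s^mR_s$, leaving no room for infinitely many disjoint paths. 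Your route buys two things: it avoids relying on the step (stated without detail in the paper) that left translation by $s^i$ preserves and reflects equivalence of rays, and it gives the slightly stronger conclusion that the translates are pairwise inequivalent rather than merely not all representable by finitely many classes. The paper's version is shorter because a single pair of equivalent translates already yields its contradiction. Both uses of the hypotheses (left cancellativity, finiteness of $\rrel$-classes, and the paths into $s$) are sound in your write-up.
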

\proof 
Let $A$ be any finite generating set for $S$ and let $\r=(r_0, r_1, \ldots)$ be a ray in $\Gamma_r(S, A)$. 
We may write $r_0$ as a product $a_1\cdots a_n$ of generators in $A$. 

Assume, seeking a contradiction, that $S$ has finitely many ends. Since $S$ is left cancellative,  $\r_i=(s^i, s^ia_1, \ldots, s^ia_1\cdots a_n=s^ir_0, s^ir_1, \ldots)$ is a 
ray for all $i\in\N$. 
Thus, by assumption, there exist $i,j\in \N$ such that $i<j$ and $\r_i\approx \r_j$. Again using the left cancellativity of $S$, it follows that $\r_{j-i}\approx \r$ and so there 
is a path from 
$s^{j-i}r_k$ to $r_l$ for some $k,l\in \N$.  There is a path from $s$ to $s^{j-i}r_k$ and hence to $r_l$.
But in this case,  $r_l\rrel r_{l+1}\rrel\cdots$ and so $S$ has an infinite $\rrel$-class, which is a contradiction. 
\qed\vspace{\baselineskip}


The main results of this section are given below. 
\begin{thm}\label{thm_cancel1}
Let $S$ be an infinite finitely generated left cancellative semigroup. Then $|\Omega S|=1,2$ or $|\Omega S|\geq \aleph_0$.
\end{thm}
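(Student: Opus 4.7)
My plan is to prove the contrapositive: if $|\Omega S|$ is finite, then $|\Omega S|\leq 2$. The argument splits according to whether $S$ has an infinite $\rrel$-class.

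Suppose first that $S$ has an infinite $\rrel$-class. Then Corollary \ref{infinf} gives infinitely many infinite $\rrel$-classes; by Proposition \ref{left_cancel} the set $U$ of regular elements is a (necessarily infinite) right group, and from the proof of Lemma \ref{1_infty} we can choose $x\in S$ whose powers $x^i$ lie in pairwise distinct infinite $\rrel$-classes $x^iU$. The induced subdigraph of $\Gamma_r(S,A)$ on each $x^iU$ is infinite, out-locally finite and strongly connected (as $x^iU$ is a single $\rrel$-class), so Lemma \ref{dominance} applied inside it produces a ray $\r_i$ whose vertices lie entirely in $x^iU$. For $i<j$, any directed path in $\Gamma_r(S,A)$ from a vertex of $\r_j$ to one in $\r_i$ would give $x^j\geq_{\rrel}x^i$ and hence $x^j\,\rrel\,x^i$, contradicting distinctness of the $\rrel$-classes $x^iU$ and $x^jU$. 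Thus $\r_j\not\preccurlyeq\r_i$ for $i<j$, the rays $\r_1,\r_2,\ldots$ represent pairwise distinct ends, and $|\Omega S|\geq\aleph_0$.

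Suppose next that every $\rrel$-class of $S$ is finite, so that Lemma \ref{noname} is available. Under the hypothesis that $|\Omega S|$ is finite, no ray in $\Gamma_r(S,A)$ has infinitely many disjoint paths terminating at a single vertex. The plan is to exploit left cancellation via a translation action on ends. For any $s\in S$, left multiplication $L_s\colon x\mapsto sx$ is injective and preserves edges of $\Gamma_r(S,A)$, so it sends rays to rays; cancelling $s$ along each edge of a path between $s\r$ and $s\r'$ turns it into a path of the same length between $\r$ and $\r'$, and this process preserves disjointness of paths. Hence $L_s$ induces an injective map $\sigma_s\colon\Omega S\to\Omega S$. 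Since $\Omega S$ is finite, each $\sigma_s$ is a permutation, and the image of $s\mapsto\sigma_s$ in $\mathrm{Sym}(\Omega S)$ is a finite subgroup; in particular, for every $s\in S$ there is $t\in S$ with $\sigma_{ts}=\mathrm{id}$, and thus $(ts)\r\approx\r$ for every ray $\r$.

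The main obstacle is to convert this translation relation into the bound $|\Omega S|\leq 2$. Assuming for contradiction $|\Omega S|\geq 3$, I would pick three pairwise inequivalent rays $\r_1,\r_2,\r_3$ and use the relations $(t_i s_i)\r_i\approx\r_i$ together with comparisons between the $\r_i$ to organise the witnessing disjoint paths into an infinite walk; then Lemmas \ref{concatenation} and \ref{walktoray} would extract a ray with controlled revisits, which I aim to show has infinitely many disjoint paths terminating at a single vertex, contradicting Lemma \ref{noname}. Making these concatenated paths concentrate on one fixed vertex, rather than being spread across the orbit of $r_{i,0}$ under repeated translation by $t_i s_i$, is where the bulk of the technical work will lie, and is the step I expect to be the most delicate.
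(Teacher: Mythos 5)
There are two genuine gaps. First, your opening case is wrong as stated: Corollary \ref{infinf} has as its hypothesis that $S$ has \emph{infinitely many} $\rrel$-classes, at least one of which is infinite; the mere existence of an infinite $\rrel$-class does not yield infinitely many of them. By Lemma \ref{1_infty} the alternative is that $S$ has exactly one $\rrel$-class, and then your argument breaks down completely --- indeed it would ``prove'' that $\mathbb{Z}$, which has a single infinite $\rrel$-class, has $\geq\aleph_0$ ends, whereas it has $2$. The one-$\rrel$-class case is precisely where the value $2$ in the statement comes from: there $S\cong G\times E$ is a right group by Theorem \ref{RightGroupLCRSimp}, and one reduces to Hopf's theorem via Corollary \ref{right_group}. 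This case cannot be folded into the ``infinite $\rrel$-class'' branch; it must be treated separately, and your proof omits it.

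Second, in the all-$\rrel$-classes-finite case you have an observation (left translation induces a permutation of a finite end set, generating a finite group) but not a proof: the step that actually derives the contradiction is explicitly deferred as ``the most delicate'' part, and it is exactly the part that carries the content of the theorem. The paper closes this gap differently. After disposing of two sub-cases via Lemma \ref{noname} (a ray with infinitely many vertices admitting paths to a common vertex, and the existence of an anti-ray, which you never address even though anti-rays contribute ends), it assumes finitely many ends, takes rays $\r_1,\dots,\r_n$ in distinct ends with the end of $\r_1$ minimal, extracts a finite separator $F$ through which all paths from $\r_1$ to the other rays must pass, and then chooses the translating element $s$ to be a vertex \emph{on} $\r_1$ beyond every vertex with a path to $F$. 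This choice forces $s\r_1$ and $s\r_2$, both reachable from $s$, to lie in the unique end reachable from $s$, whence $s\r_1\approx s\r_2$ and, by cancelling $s$, $\r_1\approx\r_2$, a contradiction. Your abstract permutation $\sigma_{ts}=\mathrm{id}$ gives no control over \emph{where} the translated rays land relative to the separator, which is why your sketch of concatenating orbit translates into a walk and invoking Lemmas \ref{concatenation} and \ref{walktoray} does not obviously terminate in the required contradiction; the localizing choice of $s$ is the missing idea.
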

\proof 
If $S$ has only one $\rrel$-class, then by \cite[Theorem 1.27]{C+P} it follows that $S\cong G\times E$ where $G$ is a group and $E$ is a right zero semigroup. Since $S
$ is finitely 
generated, it follows that $G$ is finitely generated and $E$ is finite. Hence, by Proposition \ref{right_group}, $|\Omega S|=|\Omega G|$ and  by Hopf's Theorem 
\cite[Satz I]{Hopf}, $|\Omega G|=1,2$ or $2^{\aleph_0}$. 

Suppose that $S$ has more than one $\rrel$-class. Then Lemma \ref{1_infty} implies that $S$ has infinitely many $\rrel$-classes. If $S$ contains an infinite $\rrel$-class 
then then by Corollary \ref{infinf} $S$ contains infinitely many infinite $\rrel$-classes. By Konig's lemma each infinite $\rrel$-class contains a ray, none of these rays can be 
equivalent as the $\rrel$-classes are distinct. Thus $|\Omega S|\geq \aleph_0$.

Assume that $S$ has no infinite $\rrel$-class. Let $\Gamma$ denote the Cayley group of $S$ with respect to some finite generating set $A$ for $S$. 
 If $\Gamma$ contains a ray $\r$ and there is an $s\in S$ such that there are paths from infinitely many points in $\r$ to $s$, then $\Omega S$ is infinite by Lemma 
\ref{noname}.  If $\Gamma$ contains an anti-ray $\r$, then there exists $a\in A$ such that infinitely many of the elements in $\r$ are of the form $at$ for some $t\in S$. 
 In particular,  there is a path from $a$ to every $at$ in $\r$ and so by Lemma \ref{dominance} there exists a ray $\r'$ such that $\r\preccurlyeq \r'$. But then there are 
paths from 
infinitely many of the vertices in $\r'$ to any fixed element in $\r$, and so $\Omega S$ is infinite by Lemma \ref{noname}.

 Suppose that the Cayley graph of $S$ does not have the property of Lemma \ref{noname}. 
 Seeking a contradiction assume that $S$ has finitely many ends, and let $\r_1,\r_2, \ldots, \r_n$ be rays belonging in distinct ends such that the end containing $\r_1$ is 
minimal with 
respect to $\preccurlyeq$.  Since $\r_i\not\preccurlyeq \r_1$, there exists a finite  $F\subseteq S$ such that all paths from $\r_1$ to every $\r_i$ pass through $F$. By 
assumption there 
exists element $s$ in $\r_1$ such that there are no paths from $s$ to any element of $F$ and hence to any element in any $\r_i$. Since $S$ is left cancellative, $s\r_1$ 
and $s\r_2$ 
are rays. If $s\r_1\approx \r_i$ or $s\r_2\approx \r_i$, then  there is a path from $s$ to $\r_i$ and so $i=1$. In particular, $s\r_1\approx s\r_2$ and so, since $S$ is left 
cancellative, $
\r_1\approx \r_2$, a contradiction. We have shown that $S$ either has $1$ or infinitely many ends. 
\qed

\begin{cor}
Let $S$ be an infinite finitely generated  cancellative semigroup that is not a group. Then $|\Omega S|=1$ or $|\Omega S|\geq \aleph_0$.
\end{cor}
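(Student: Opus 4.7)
The plan is to deduce this corollary from Theorem \ref{thm_cancel1} by ruling out the value $|\Omega S|=2$ under the stronger hypothesis that $S$ is cancellative (rather than merely left cancellative) and not a group. Since $S$ is in particular left cancellative, Theorem \ref{thm_cancel1} already gives $|\Omega S|\in\{1,2\}$ or $|\Omega S|\geq\aleph_0$, so the only new work is to eliminate the value $2$.

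Tracing the proof of Theorem \ref{thm_cancel1}, the value $2$ is contributed exclusively by the first branch, in which $S$ has a single $\rrel$-class: Proposition \ref{left_cancel} together with Corollary \ref{right_group} reduce the count to $|\Omega G|$ for a finitely generated group $G$, and Hopf's Theorem then allows the value $2$. Every remaining branch of that proof outputs either $1$ or $\geq\aleph_0$ ends. Hence it suffices to show that a cancellative semigroup with only one $\rrel$-class must be a group, for then this first branch is unavailable under our hypotheses.

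For that step, I would apply Theorem \ref{RightGroupLCRSimp} to write $S\cong G\times E$ with $G$ a group and $E$ a right zero semigroup, and then use right cancellativity to force $|E|=1$. Indeed, if $e_1,e_2\in E$ are arbitrary, then
\[
(1_G,e_1)(1_G,e_1)=(1_G,e_1)=(1_G,e_2)(1_G,e_1),
\]
so right cancellation gives $(1_G,e_1)=(1_G,e_2)$ and hence $e_1=e_2$. Therefore $S=G$ is a group, contradicting the hypothesis. This leaves only the other cases of Theorem \ref{thm_cancel1}, in which $|\Omega S|=1$ or $|\Omega S|\geq\aleph_0$.

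There is no real obstacle here: the argument is a bookkeeping reduction to the left cancellative theorem, pivoting on the one structural observation that a right group fails to be right cancellative whenever its right zero factor is non-trivial.
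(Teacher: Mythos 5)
Your proposal is correct and follows essentially the same route as the paper: both reduce to Theorem \ref{thm_cancel1} and observe that the value $2$ can only arise when $S$ has a single $\rrel$-class, which for a cancellative semigroup forces $S$ to be a group. Your explicit verification that right cancellativity collapses the right zero factor $E$ of the right group $G\times E$ simply fills in a detail the paper leaves implicit.
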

\proof Since $S$ is cancellative, it is certainly left cancellative and so $|\Omega S|=1, 2$, or $|\Omega S|\geq \aleph_0$ by Theorem \ref{thm_cancel1}.  
If $|\Omega S|=2$, then from the proof of Theorem \ref{thm_cancel1}, $S$ has only one $\rrel$-class and hence is a group. \qed


As mentioned above, it is not known what cardinalities $\Omega S$ can have, even for restricted types of 
semigroups such as those which are left cancellative. We prove that $\Omega S$ has cardinality $2^{\aleph_0}$ 
for a particular type of cancellative semigroup.
Ore's Theorem (see for instance \cite[Theorem 1.23]{C+P}) states that if a cancellative semigroup $S$ satisfies the condition that $sS\cap tS\not =\emptyset$ for all $s,t\in S$ then $S$ can be embedded in a group. 

\begin{thm}
A cancellative semigroup which cannot be embedded in a group has $2^{\aleph_0}$ ends.
\end{thm}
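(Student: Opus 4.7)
The plan is to exhibit $2^{\aleph_0}$ pairwise inequivalent rays in the right Cayley graph $\Gamma_r(S,A)$ of $S$ with respect to some finite generating set $A$. Since $S$ is finitely generated, $S$ is countable; and if $S$ were finite then two-sided cancellativity would force $S$ to be a group, contrary to hypothesis, so $|S|=\aleph_0$ and the trivial upper bound $|\Omega S|\leq \aleph_0^{\aleph_0}=2^{\aleph_0}$ matches the lower bound I will produce. By the contrapositive of Ore's theorem there exist $s,t\in S$ with $sS\cap tS=\emptyset$; I would first upgrade this to $sS^1\cap tS^1=\emptyset$ (if $t=sx$ for some $x\in S$ then $tS\subseteq sS$, forcing $sS\cap tS=tS\neq\emptyset$; symmetrically $s\notin tS$; and $s\neq t$), and, using Corollary~\ref{change_gen}, assume $s,t\in A$.

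For each sequence $\epsilon=(\epsilon_0,\epsilon_1,\ldots)\in\{s,t\}^\omega$ consider the infinite walk
\[
\mathbf{r}_\epsilon = (\epsilon_0,\ \epsilon_0\epsilon_1,\ \epsilon_0\epsilon_1\epsilon_2,\ \ldots)
\]
in $\Gamma_r(S,A)$. I would check that each $\mathbf{r}_\epsilon$ is in fact a ray, i.e.\ that its vertices are distinct. A coincidence $\epsilon_0\cdots\epsilon_i=\epsilon_0\cdots\epsilon_j$ with $i<j$ yields, after left cancellation, an equation $u=uv$ with $v=\epsilon_{i+1}\cdots\epsilon_j\in S$. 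Two-sided cancellativity forces $v$ to be an identity of $S$ (left-cancelling $u$ from $ux=uvx$ shows $v$ is a left identity, and right-cancelling $v$ from $yv\cdot v=yv^2=yv$ shows it is a right identity), so $S$ is a monoid and $\epsilon_{i+1}$ has a right inverse. In a cancellative monoid a right inverse is two-sided, so $\epsilon_{i+1}$ is a unit; but then $\epsilon_{i+1}S=S$, giving $sS\cap tS\neq\emptyset$ and contradicting the choice of $s,t$.

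Now suppose $\epsilon\neq\epsilon'$ and let $k$ be the first coordinate at which they disagree, say $\epsilon_k=s$ and $\epsilon'_k=t$; write $c=\epsilon_0\cdots\epsilon_{k-1}$ (dropped if $k=0$). Every vertex of $\mathbf{r}_\epsilon$ from position $k$ onwards lies in $c\cdot sS^1$, every corresponding vertex of $\mathbf{r}_{\epsilon'}$ lies in $c\cdot tS^1$, and these sets are disjoint by left cancellation and $sS^1\cap tS^1=\emptyset$. Since in $\Gamma_r(S,A)$ every vertex reachable from $x$ lies in $xS^1$, no path starting at a tail vertex of one ray can reach any vertex of the other: tails cannot reach each other by disjointness, and a tail of $\mathbf{r}_\epsilon$ cannot reach any of the shared initial vertices of $\mathbf{r}_{\epsilon'}$ by the same $u=uv$/unit argument as above. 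Hence every path from $\mathbf{r}_\epsilon$ to $\mathbf{r}_{\epsilon'}$ must start at one of the at most $k$ shared initial vertices, so only finitely many disjoint paths exist and $\mathbf{r}_\epsilon\not\preccurlyeq\mathbf{r}_{\epsilon'}$; symmetrically $\mathbf{r}_{\epsilon'}\not\preccurlyeq\mathbf{r}_\epsilon$. This produces $2^{\aleph_0}$ pairwise inequivalent rays and hence $|\Omega S|=2^{\aleph_0}$.

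The main technical obstacle is verifying that each $\mathbf{r}_\epsilon$ has distinct vertices: this is precisely where two-sided cancellativity is indispensable, since it rules out the ``cycle'' equations $u=uv$ with $v\in S$. With only left cancellativity one would have to extract rays from these walks via Lemma~\ref{walktoray} and then re-argue inequivalence for the extracted rays, a substantially more delicate task; the full hypothesis of the theorem neatly avoids this.
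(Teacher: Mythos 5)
Your proof is correct and follows essentially the same route as the paper's: the contrapositive of Ore's theorem gives $s,t$ with $sS\cap tS=\emptyset$, the words over $\{s,t\}$ form a free subsemigroup in which divisibility is the prefix order, and the continuum of infinite $\{s,t\}$-words yields pairwise inequivalent rays. You make explicit some details the paper leaves implicit (adjoining $s,t$ to the generating set, ruling out paths through the shared initial segment), but the underlying argument is the same.
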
 
\proof
Let $S$ be a cancellative semigroup that cannot be embedded in a group. As $S$ is not group-embeddable there exists $s,t\in S$ such that $sS\cap tS=\emptyset$.  Firstly we show that 
all elements of $\{s,t\}^*$ are distinct. Let $u=u_1u_2\ldots u_n,v=v_1,v_2\ldots v_m \in \{s,t\}^*$ and assume $u=_S v$, without loss of generality we assume the length of $u$ is less 
than or equal to the length of $v$. If $u$ is a prefix of $v$ then  $u=v=uv'$. It follows that $v'$ is a left identity for all elements of $S$. The first letter of $v'$ is (without loss of generality) 
$s$ and hence $tx=v'tx\in sS$ for all $x\in S$. If $u$ is not a prefix of $v$ then there exists a position $i\le n$ such that $u_j=v_j$ for all $j<i$ but $u_i\neq v_j$. As $u_j=v_j$ for all 
$j<i$ and $u=_S v$ it follows by left-cancellativity that $u_i\ldots u_n=v_i\ldots v_m$ and $u_i\neq v_j$, however, $sS\cap tS=\emptyset$ a contradiction. 

We now show that for $u,v\in \{s,t\}^*$ we have $v\in uS$ if and only if $u$ is a prefix of $v$. Clearly if $u$ is a prefix of $v$ then $v\in uS$. With the aim of getting a contradiction 
assume that $u=u_1u_2\ldots u_n$ is not a prefix of $v=v_1v_2\ldots v_m$ but $v\in uS$. This means there exists $x\in S$ such that $ux=v$. As $u$ is not a prefix of $v$ there exists 
$1\le i\le n$ such that $u_j=v_j$ for all $j<i$ but $u_i\neq v_j$. But then by left-cancellativity $u_i\ldots u_n x=_S v_i\ldots v_m$. Then as $\{u_i,v_i\}=\{s,t\}$ it follows that 
$sS\cap tS\neq \emptyset$. 

 Combining these facts gives a copy of the free semigroup on two generators as a subsemigroup of $S$ and there can be no paths between elements, this means $S$ has at least 
$2^{\aleph_0}$ ends. This is also the maximum possible number of ends so $|\Omega S|=2^{\aleph_0}$.
\qed


\section{Subsemigroups of finite Green index}


It follows from Proposition \ref{left_cancel} that if $T$ is a subsemigroup of a left cancellative semigroup $S$, then $\rrel^T=\rrel^V$ where $V$ is the right group of 
regular elements of 
$T$. 

Let $S$  be a semigroup and let $T$ be a subsemigroup of finite Green index.  It was shown in \cite{CGR} that $S$ is 
finitely generated if and only if $T$ is finitely generated. 
If $T$ is a submonoid of a left-cancellative monoid $S$ and $T$ has finite Green index in $S$, then, since the complement is an 
ideal, the group of units of $T$ has finite index in the 
group of units of $S$.


\begin{lem}\label{RightGroupGreenSubsemi}
If $S=G\times E$ is a right group, $G$ is infinite and $T$ is a subsemigroup of finite Green index then $T=H\times E$ 
is a right group where $H$ is of finite index in $G$.
\end{lem}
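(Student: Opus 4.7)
The plan is to identify $T$ directly as a product $H\times E$ for a subgroup $H\leq G$, by studying the slices $H_e := \{g\in G : (g,e)\in T\}$ for $e\in E$ and exploiting finite Green index to rule out pathological behaviour. Since $(a,e)(b,f)=(ab,f)$, each $H_e$ is a subsemigroup of $G$ and moreover $H_eH_f\subseteq H_f$ for all $e,f\in E$. I would proceed in four steps: show $\pi_E(T)=E$; show each $H_e$ is a subgroup; show the $H_e$ all coincide; and deduce $[G:H]$ is finite.

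For the first step, if some $e\in E$ is not in $\pi_E(T)$, then no element of $T$ has second coordinate $e$, so for distinct $(g,e),(h,e)\in G\times\{e\}$ the only element of $(g,e)T^1$ with second coordinate $e$ is $(g,e)$ itself, forcing $(g,e)\not\mathrel{\mathcal{R}^T}(h,e)$. Since $G$ is infinite this yields infinitely many $\mathcal{R}^T$-classes in $S\setminus T$, contradicting finite Green index.

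For the subgroup step, suppose $a\in H_e$ with $a^{-1}\notin H_e$. Then $a$ must have infinite order (otherwise $a^{-1}$ is a positive power of $a$ and so lies in $H_e$), and one checks $a^{-n}\notin H_e$ for every $n\geq 1$ (else multiplying by $a^{n-1}\in H_e$ would yield $a^{-1}\in H_e$). The elements $y_n:=(a^{-n},e)$ for $n\geq 1$ all lie in $S\setminus T$; and for $n<m$, membership $y_m\in y_nT^1$ requires $(a^{n-m},e)\in T$, which fails since $n-m<0$. So the $y_n$ lie in pairwise distinct $\mathcal{R}^T$-classes, contradicting finite Green index. Hence $H_e$ is a subgroup. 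Now for $a\in H_e$, the inclusion $aH_f\subseteq H_f$ together with $a^{-1}H_f\subseteq H_f$ forces $aH_f=H_f$, and since $H_f$ is a subgroup containing $1$ this yields $a\in H_f$. Thus $H_e\subseteq H_f$, and by symmetry $H_e=H_f=:H$ for all $e,f$, giving $T=H\times E$.

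Finally, for $(g,e)\in S\setminus T$ one has $(g,e)T^1=gH\times E$, so $(g,e)\mathrel{\mathcal{R}^T}(h,f)$ iff $gH=hH$. The $\mathcal{R}^T$-classes in $S\setminus T$ are therefore in bijection with the non-trivial left cosets of $H$ in $G$, of which there are $[G:H]-1$; finite Green index then forces $[G:H]$ to be finite. The main obstacle is the subgroup step: turning the one-sided condition $H_eH_e\subseteq H_e$ into genuine group closure demands a non-trivial use of finite Green index, engineered by using negative powers of a hypothetical non-invertible element $a$ to manufacture infinitely many $\mathcal{R}^T$-inequivalent elements of $S\setminus T$.
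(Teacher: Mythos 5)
Your proof is correct, and its opening moves coincide with the paper's: both introduce the slices $H_e=\{g\in G:(g,e)\in T\}$, observe $H_fH_e\subseteq H_e$, rule out an empty slice by producing infinitely many trivial $\mathcal{R}^T$-classes in $G\times\{e\}\subseteq S\setminus T$, and conclude $H_e=H_f$ for all $e,f$ once the identity is known to lie in each slice. Where you genuinely diverge is in proving that $H$ is a subgroup of finite group index. The paper only establishes $1_G\in H_e$ directly: it splits into cases according to whether $G\setminus H_e$ is finite (whence $H_e=G$) or infinite, and in the latter case uses the pigeonhole principle to find two distinct $\mathcal{H}^T$-related elements $(g,e),(g',e)$ of $S\setminus T$; translating each to the other by elements of $T$ yields a product $hh'=1_G$ in $H_e$. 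It then quotes a result of Gray and Ru\v skuc (a subsemigroup of finite Green index in a group is a subgroup of finite group index) to finish. You instead prove closure under inverses from scratch: if $a\in H_e$ but $a^{-1}\notin H_e$, then $a$ has infinite order, no negative power of $a$ lies in $H_e$, and the elements $(a^{-n},e)$ fall into infinitely many distinct $\mathcal{R}^T$-classes inside $S\setminus T$ --- in effect you have inlined a proof of the quoted result in this special case; your computation $(g,e)T^1=gH\times E$ then gives $[G:H]<\infty$ directly. Your version is longer but self-contained, avoids the citation, and sidesteps the paper's slightly delicate auxiliary claim that a subsemigroup of finite Rees index in a group equals the whole group. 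One cosmetic point: when asserting that the $y_n=(a^{-n},e)$ are pairwise $\mathcal{R}^T$-inequivalent, note also that $y_n\neq y_m$ (immediate since $a$ has infinite order), because $y_nT^1$ always contains $y_n$ itself.
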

\begin{proof}
One can see $S$ has only one $\rrel^S$-class, therefore the $\mathcal{H}^S$-classes of $S$ are the 
$\mathcal{L}^S$-classes of $S$. As $(g,e)\cdot (h,f)=(gh,f)$ we see that $\mathcal{L}^S$-classes are of the form 
$G\times \{e\}$ for each $e\in E$.

If $T$ contains no elements of the form $(g,e)$ for some fixed $e\in E$ then the $\rrel^T$-class of each $(h,e)$ 
must be trivial. This follows as $(h,e)(g,f)$ can only be of the form $(hg,f)$ where $f\neq e$ and then there exists 
no element $(g',f')\in T$ such that $(hg,f)(g',f')=(h,e)$ as $T$ contains no elements of the form $(g,e)$. 

For each $e\in E$ we let $H_e$ be those elements $h\in G$ such that $(h,e)\in T$. We now show each $H_e$ 
contains $1_G$. Let $e\in E$. One can see $H_e$ is a subsemigroup of $G$ as in particular $(g,f)(h,e)=(gh,e)$ 
so $H_fH_e\subseteq H_e$. It is easy to see that a subsemigroup of finite Rees index in 
$G$ is equal to $G$ so we may assume $G\setminus H_e$ is infinite. As $\mathcal{H}^T$-classes are contained 
in $\mathcal{H}^S$-classes and as $G\setminus H_e$ is infinite we must have at least one non-trivial $\mathcal{H}^T$-class containing distinct elements $(g,e),(g',e)$ 
with $g,g'\not 
\in H_e$. As these elements are $\mathcal{H}^T$-related they are $\rrel^T$-related and hence there exists $(h,f),(h',f,)\in T$ such that $(g,e)(h,f)=(g',e)$ and $(g',e)
(h',f')=(g,e)$. This means $f=f'=e$ and furthermore that $ghh'=g$. It follows $hh'=1_G$ is an element of $H_e$. Hence, $H_e\subseteq H_f$ for all $e,f\in E$ so $H_e=H_f$ for all 
$e,f\in E$. We 
call this semigroup $H$.

As $H\times E$ has finite Green index in $G\times E$ it must follow that $H$ has finite Green index in $G$. It was shown in \cite[Corollary 34]{Bob+Nik} that if $H$ is a subsemigroup 
of finite index 
in a group $G$ then $H$ is a subgroup of $G$ with finite group index.
\end{proof}


\begin{lem}\label{simple_but_useful}
Let $S$ be a semigroup generated by  $A$ and let $T$ be a subsemigroup  of $S$ generated by $B$ with Green index $n\in\N$. If $s\in S$ and $a_1, a_2, \ldots, a_{m
+k}\in A$ such 
that the number of $\rrel^T$-classes containing any of $sa_1a_2\cdots a_{m+1}, sa_1a_2\cdots a_{m+2}, \ldots, sa_1a_2\cdots a_{m+k}$ is at least $n$, then there exist 
$i>m$ and 
$b_1, b_2, \ldots, b_j\in B$ such that $sa_1\cdots a_i=b_1\cdots b_j$. 
\end{lem}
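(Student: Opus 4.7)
The plan is to reduce the lemma to a pigeonhole argument on $\rrel^T$-classes. The crucial structural fact I would invoke is that if $T$ has Green index $n$ in $S$, then the complement $S \setminus T$ meets at most $n-1$ distinct $\rrel^T$-classes; indeed, the definition of Green index counts the relative $\mathcal{H}^T$-classes in $S \setminus T$, and since each $\rrel^T$-class not wholly contained in $T$ is a union of such $\mathcal{H}^T$-classes lying in $S \setminus T$, the number of $\rrel^T$-classes meeting $S\setminus T$ is bounded above by $n-1$.

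With that in hand, my plan is as follows. By hypothesis the $k$ elements $sa_1\cdots a_{m+1}, sa_1\cdots a_{m+2}, \ldots, sa_1\cdots a_{m+k}$ occupy at least $n$ distinct $\rrel^T$-classes of $S$. Since only $n-1$ of these classes can live entirely in $S\setminus T$, at least one of the classes represented must be a class contained in $T$, which forces some $sa_1\cdots a_{m+i}$ (with $1 \leq i \leq k$) to be an element of $T$. In particular, the index $m+i$ satisfies $m+i > m$, which is the condition required by the conclusion.

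Once $sa_1\cdots a_{m+i} \in T$ is secured, the final step is immediate: because $T$ is generated by $B$, any element of $T$ can be expressed as a finite product of elements of $B$, and writing $sa_1\cdots a_{m+i} = b_1\cdots b_j$ with each $b_l\in B$ completes the argument. Note that the lemma, unlike Lemma \ref{path_replace}, places no uniform bound on $j$, so no counting argument is needed to control the length of this product.

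The only delicate point is the first step, namely justifying the upper bound of $n-1$ on the number of $\rrel^T$-classes lying in $S\setminus T$; this is essentially a matter of unwinding the definition of Green index and checking that the containment $\mathcal{H}^T \subseteq \rrel^T$ pushes the count in the right direction. Once that is accepted, the remainder of the proof is a one-line pigeonhole argument followed by the defining property of the generating set $B$.
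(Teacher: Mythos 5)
Your proof is correct and takes essentially the same route as the paper's: a pigeonhole argument showing that, since at most $n-1$ distinct $\rrel^T$-classes can meet $S\setminus T$ (because the Green index counts $\mathcal{H}^T$-classes there and $\mathcal{H}^T\subseteq\rrel^T$), one of the listed elements must lie in $T$ and is therefore a product of elements of $B$. In fact you spell out the counting step in more detail than the paper, which simply asserts that hitting $n$ classes forces some $sa_1\cdots a_{m+i}$ into $T$.
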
 

\proof 
If $sa_1a_2\cdots a_{m+1}, sa_1a_2\cdots a_{m+2}, \ldots, sa_1a_2\cdots a_{m+k}$ contains elements from $n$ $\rrel^T$-classes then $sa_1a_2\cdots a_{m+i}$ is an 
element of $T$ for some $i$. Any element of $T$ can be expressed over $B$ and hence there exists $b_1,b_2\ldots b_j\in B$ such that  $sa_1a_2\cdots a_{m+i}
=b_1b_2\cdots b_j$.

 \qed

\begin{thm}\label{thm_greens}
Let $S$ be a finitely generated left cancellative semigroup and let $T$ be a subsemigroup of $S$ of finite Green
index. Then $|\Omega S|=|\Omega T|$.
\end{thm}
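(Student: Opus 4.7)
The plan is to combine the structural analysis of right groups with the tools developed in Section~\ref{ends of a semigroup}, splitting into cases according to whether $S$ is $\rrel^S$-simple. By \cite{CGR}, the finiteness of the Green index forces $T$ to be finitely generated; fix finite generating sets $A$ for $S$ and $B$ for $T$.

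If $S$ has a single $\rrel^S$-class, then Theorem \ref{RightGroupLCRSimp} gives $S = G \times E$ with $G$ a finitely generated group and $E$ a finite right zero semigroup. The case $|G|<\infty$ is trivial (all objects finite, no ends). Otherwise Lemma \ref{RightGroupGreenSubsemi} forces $T = H \times E$ with $H$ a subgroup of $G$ of finite group index, and Corollary \ref{right_group} reduces the claim to $|\Omega G| = |\Omega H|$, which is the standard fact that finite-index subgroups of a finitely generated group have the same number of ends.

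If $S$ has more than one $\rrel^S$-class, I would mirror the two-step scheme of Proposition \ref{master}, with Lemma \ref{simple_but_useful} playing the role that Lemma \ref{path_replace} played there. Given a ray $\r = (x, xa_1, xa_1a_2, \ldots)$ in $\Gamma_r(S, A)$, iterating Lemma \ref{simple_but_useful} with parameter equal to the Green index $n$ should produce an infinite increasing sequence of indices $i_1 < i_2 < \cdots$ with $xa_1\cdots a_{i_k} \in T$, each reachable by a $B$-walk of length at most some uniform $N$. Concatenating these walks yields a sequence in $\Gamma_r(T, B)$ whose vertices occur with bounded multiplicity by Lemma \ref{concatenation}, and Lemma \ref{walktoray} then extracts a ray $\r'$ in $\Gamma_r(T, B)$ with $\r \approx \r'$; the analogous argument handles anti-rays. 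The second step shows that $\preccurlyeq$-comparability of $B$-rays witnessed in $\Gamma_r(S, A)$ transfers to $\Gamma_r(T, B)$ by the same rewriting, together with the out-local-finiteness argument used at the end of the proof of Proposition \ref{master}. Together these steps give a poset isomorphism $\Omega S \cong \Omega T$, and in particular $|\Omega S| = |\Omega T|$.

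The principal obstacle is that, unlike in Proposition \ref{master} where $\genset{T, s}\setminus T$ was finite and therefore visited only finitely often by any ray, here $S \setminus T$ can be infinite and a ray may in principle never enter $T$. Lemma \ref{simple_but_useful} only guarantees a return to $T$ once the walk has spread across at least $n$ distinct $\rrel^T$-classes, so the delicate point is to handle rays that remain within fewer than $n$ $\rrel^T$-classes of $S \setminus T$ forever. Left cancellativity together with the assumption that $S$ is not $\rrel^S$-simple should be deployed at this point, for example by using $\rrel^T$-witnesses to produce infinitely many short connections from such a trapped ray into $T$ and then invoking Lemma \ref{dominance} to dominate the trapped portion by a ray supported in $T$. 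Making this last manoeuvre precise, while ensuring the resulting ray genuinely represents the same end, is the technical heart of the argument.
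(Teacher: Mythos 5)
Your first case (a single $\rrel^S$-class) matches the paper exactly. The gap is in the second case, and it is fatal to the strategy rather than a missing detail: you aim to produce a poset isomorphism $\Omega S\cong\Omega T$, but no such isomorphism exists in general --- the statement is only an equality of cardinalities, and the paper's own example $S=\mathbb{Z}\times\mathbb{Z}\times\mathbb{N}_0$, $T=\mathbb{Z}\times\mathbb{Z}\times(\mathbb{N}_0\setminus\{1\})$ is a cancellative semigroup with a finite Green index subsemigroup whose end posets are not isomorphic (in $\Omega S$ all ends are comparable, in $\Omega T$ they are not). Worse, the first step of your scheme already fails there: the end of $S$ determined by the layer $\mathbb{Z}\times\mathbb{Z}\times\{1\}$ contains no ray with vertices in $T$, since any equivalent ray must have infinitely many vertices in layers $\leq 1$ (to send disjoint paths into layer $1$) and infinitely many in layers $\geq 1$ (to receive them), while a ray avoiding layer $1$ can meet layer $0$ only in an initial segment because the third coordinate is non-decreasing along edges. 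So rays trapped in $\rrel^T$-classes of the complement genuinely cannot be pushed into $T$, and no refinement of Lemma \ref{simple_but_useful} or Lemma \ref{dominance} will rescue an end-by-end bijection.

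The paper resolves this with a different decomposition, driven by Proposition \ref{left_cancel} and Corollary \ref{infinf}. If $S$ has more than one $\rrel^S$-class it has infinitely many. If none is infinite, then $S\setminus T$, being a union of finitely many $\rrel^T$-classes each of which is finite, is itself finite, so $T$ has finite Rees index and Corollary \ref{th:Rees} gives the (stronger) poset isomorphism. If some $\rrel^S$-class is infinite, then infinitely many are; each pair of infinite classes carries a pair of inequivalent rays, so $|\Omega S|$ and $|\Omega T|$ are both at least $\aleph_0$, and by Zuther's dichotomy every end either lives inside one strongly connected component (an $\rrel$-class) or meets infinitely many of them, whence $|\Omega S|=\max\{\aleph_0,|\Sigma(S)|\}$ with $\Sigma(S)$ counting only ends of the second kind. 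The trapped ends are thus absorbed by a cardinality count and never matched individually; the rewriting via Lemma \ref{simple_but_useful} is applied only to rays meeting infinitely many $\rrel^T$-classes --- exactly where it works --- to show $|\Sigma(S)|=|\Sigma(T)|$. (A further subcase you omit uses the right group $U$ of regular elements to handle the possibility that $\Omega U$, and hence $\Omega S$ and $\Omega T$, has $2^{\aleph_0}$ ends.) Your identification of the obstacle in the final paragraph is accurate, but the proposed fix points in the wrong direction.
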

\proof 
If $S$ is right simple, then $S\cong G\times E$ for some finitely generated group $G$ and $E$ is a finite right zero semigroup. Since 
$T$ has finite Green index in $S$, it follows that $T\cong H\times E$ where $H$ is a subgroup of finite index in $G$. In other words, $T$ is a right group and so $|\Omega 
T|=|\Omega H|
=|\Omega G|=|\Omega S|$ by Lemma \ref{right_group}.

Let $U$ be the right group of regular elements in $S$. 
  Since $S$ is finitely generated, it follows that $T$ is finitely generated. Let $A$ and $B$  be finite generating sets for $S$ and $T$, respectively, such that $B\subseteq 
A$.
Since $S\setminus U$ is an ideal, $U$ is also finitely 
 generated. Hence, as $T$ is also left cancellative, the right group of regular elements $V$ of $T$ is finitely generated.  It follows by Proposition \ref{left_cancel}
that $\rrel^T=\rrel^V$, and so $V$ has finite Green index in $U$.

Suppose that $S$ has more than one $\rrel$-class. Then, by Lemma \ref{1_infty},  $S$ has infinitely many $\rrel$-classes. If $S$ has 
no infinite $\rrel$-classes, then since $\rrel^T$-classes are contained in $\rrel^S$-classes, it follows that $T$ has finite Rees index in $S$ and so by Corollary 
\ref{th:Rees}, the 
theorem follows. 
We now consider the case that $S$ has infinitely many infinite $\rrel$-classes. By Proposition \ref{right_group}, $U$ either has $1$, $2$, or $2^{\aleph_0}$ ends. 

 If $U$ has $2^{\aleph_0}$ ends, then, since $S\setminus U$ is an ideal, $S$ has  $2^{\aleph_0}$ ends. 
  Since $V$ has finite Green index in $U$ and $U$ is a right group,  $V$  has $2^{\aleph_0}$ ends and so $T$ has $2^{\aleph_0}$ ends also. 
 
  Suppose that $U$ has $1$ or $2$ ends. Then $S$ and $T$ have at least $\aleph_0$ ends, since every pair of infinite $\rrel$-classes contain a pair of inequivalent rays. 
Let $
\Sigma(S)$ be the set of ends of $S$ containing a ray that has non-empty intersection with infinitely many $\rrel^S$-classes. By \cite[Lemma 2.8]{Zuther}, if $\omega$ is 
an end of $
\Gamma_r(S, A)$, then every ray in $\omega$ 
 is contained in a  strongly connected component or intersects infinitely many strongly connected components (but not both). 
 Since connected components of  $\Gamma_r(S, A)$ are precisely $\rrel^S$-classes, it follows that 
 $|\Omega S|=\max\{\aleph_0, |\Sigma(S)|\}$ and $|\Omega T|=\max\{\aleph_0, |\Sigma(T)|\}$. We conclude the proof by showing that $|\Sigma(S)|=|\Sigma(T)|$.
  
Let $\r$ be a ray or anti-ray in $\Gamma_r(S, A)$ that has non-empty intersection with 
 infinitely many $\rrel^S$-classes. Since every $\rrel^S$-class is a union of $\rrel^T$-classes, $\r$ has non-empty intersection with infinitely many $\rrel^T$-classes. 
Since there are 
only finitely many $\rrel^T$-classes in $S\setminus T$, we may assume without loss of generality that the elements in $\r$ are in $T$. 
  Let $n$ be the number of $\rrel^{T}$-classes in $S\setminus T$ and let $(xc_1, xc_1 c_2, \ldots, xc_1\cdots c_m)$ be a subpath of $\r$ 
  that has non-empty intersection with $n+1$, $\rrel^T$-classes. By left cancellativity, the path 
 $(c_1, c_1 c_2, \ldots, c_1\cdots c_m)$ has non-empty intersection with at least $n+1$ $\rrel^T$-classes also. 
 It follows that there exists $i$ such that $c_1\cdots c_i\in T$. Hence $c_1\cdots c_i$ is a product $b_1b_2\cdots b_j$ of elements in
  the generating set $B$ for $T$. Recursively replacing every such path  $(xc_1, xc_1 c_2, \ldots, xc_1\cdots c_i)$ by the corresponding walk $(xb_1, xb_1b_2, \ldots, 
xb_1\cdots b_j)$ 
we obtain a walk $\mathbf{w}=(w_0, w_1, \ldots)$ in $\Gamma_r(T, B)$ that has non-empty intersection with infinitely many $\rrel^T$-classes contained in $T$. 
If $i<j$ and $w_i\rrel^T w_j$, then $w_i\rrel^T w_{i+1}\rrel^T \cdots\rrel^T w_{j}$. But $\mathbf{w}$ has non-empty intersection with infinitely many $\rrel^T$-classes and 
so every 
vertex of $\mathbf{w}$ occurs only finitely many times. Hence, by Lemma \ref{walktoray}, $\mathbf{w}$ is equivalent to a ray or anti-ray in $\Gamma_r(T, B)$. 

Let $\r_1$ be a ray or anti-ray and let $\r_2$ be a ray or anti-ray in  $\Gamma_r(T, B)$ such that $\r_1$ and $\r_2$ have non-empty intersection with 
 infinitely many $\rrel^S$-classes. 
If $\r_1$ is equivalent to $\r_2$ in $\Gamma_r(T, B)$, then clearly $\r_1$ is equivalent to $\r_2$ in $\Gamma_r(S, A)$.  
Suppose that $\r_1$ is equivalent to $\r_2$ in $\Gamma_r(S, A)$. In this case, there are infinitely many disjoint paths from $\r_1$ to $\r_2$ and vice versa. By repeatedly 
applying 
Lemma \ref{simple_but_useful}, there exist infinitely many paths from $\r_1$ to $\r_2$ labelled by elements of $B$. If infinitely many of these paths are disjoint, then the proof is 
complete. 
Otherwise 
infinitely many of these paths have non-empty intersection with a finite subset of $S$, and so infinitely many paths contain some fixed element $s\in S$. But then there 
exists a path 
from $s$ to element in $\r_2$ and a path from that vertex  to an element in $\r_1$, and so infinitely many elements in $\r_1$ are $\rrel^S$-related, a contradiction. We 
have shown that 
for all rays or anti-rays $\r_1$ and $\r_2$ in $\Gamma_r(T, B)$ such that $\r_1$ and $\r_2$ have non-empty intersection with 
 infinitely many $\rrel^S$-classes, $\r_1$ is equivalent to $\r_2$ in $\Gamma_r(T,B)$ if and only if they are equivalent in $\Gamma_r(S,A)$. Therefore $|\Sigma(S)|=|
\Sigma(T)|$, as 
required. 
 \qed\vspace{\baselineskip}


\section{Examples}

In this section we give several examples of finitely generated semigroups $S$ and describe $\Omega S$ for these examples. 

The following example shows that unlike in the groups case it is possible for a left cancellative semigroup to have $\aleph_0$ ends.
\begin{ex}\label{NxN}
The semigroup $\N_0 \times \N_0$ under componentwise addition has $\aleph_{0}$ ends. For the sake of brevity we use $\Gamma$ to denote the Cayley graph $\Gamma_r(\N_0 \times \N_0,\{(0,1),(1,0)\})$. We show that any ray in  $\Gamma$ is equivalent to one of $$((i,0),(i,1),(i,2),\ 
\ldots ),\ ((0,i),(1,i),(2,i), \ldots)\text{ or }((0,0),(1,0),(1,1),(2,1),(2,2)\ldots)$$ for each $i\in \N_0$. We first note that there are no anti-rays in $\Gamma$. Any ray 
either contains finitely many elements in the first component of its vertices, finitely many elements in the second component of its vertices or infinitely many distinct 
elements in both components. In the first case as elements are eventually of the form $(i,j)$ for some fixed $i$ the ray is equivalent to $((i,0),(i,1),(i,2), \ldots )$. Equivalently if the ray has finitely many elements in the second component of its vertices then it will be equivalent to some  $((0,i),(1,i),(2,i), \ldots)$. In the case that the ray has infinitely many distinct elements in both components then for any element $(i,j)$ where $i<j$ there is a path from $(i,i)$ to $(i,j)$ to $(j,j)$ and we see that the ray is equivalent to $((0,0),(1,0),(1,1),(2,1), (2,2),
\ldots)
$.
\end{ex}

The following example demonstrates the existence of anti-rays which are not equivalent to any ray. It also shows that it is possible to have anti-rays in a semigroup with trivial $\rrel$-classes.

\begin{ex}\label{ex:kosarka}
Let $M$ be the monoid $\langle a,b| aba=b \rangle$. It is easy to check that $aba\to b$ and $b^2a\to ab^2$ is a complete rewriting system.  In a similar way to Example 
\ref{NxN} we can show that this monoid has $\aleph_0$ ends.

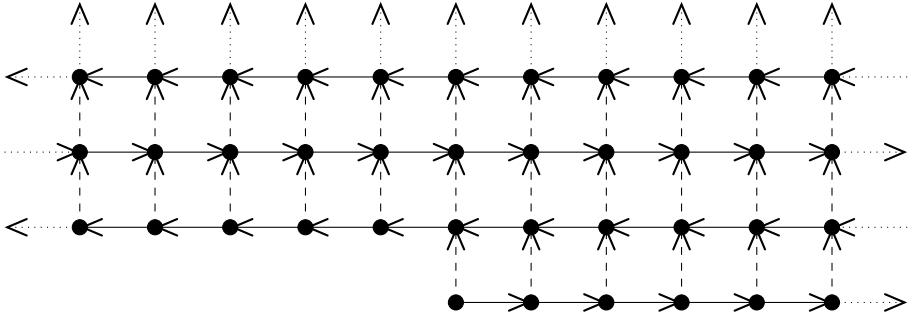
\begin{figure}
\caption{A portion of the right Cayley graph of  $\langle a,b\:|\: aba=b \rangle$ from Example \ref{ex:kosarka}, edges labelled by $a$ are represented with solid lines and those labelled by $b$ with dashed lines.}
\begin{center}
\begin{tikzpicture}[>=angle 45]
\draw[fill] (0,0) circle (0.1cm);
\draw[fill] (1,0) circle (0.1cm);
\draw[fill] (2,0) circle (0.1cm);
\draw[fill] (3,0) circle (0.1cm);
\draw[fill] (4,0) circle (0.1cm);
\draw[fill] (5,0) circle (0.1cm);

\draw[fill] (0,1) circle (0.1cm);
\draw[fill] (1,1) circle (0.1cm);
\draw[fill] (2,1) circle (0.1cm);
\draw[fill] (3,1) circle (0.1cm);
\draw[fill] (4,1) circle (0.1cm);
\draw[fill] (5,1) circle (0.1cm);
\draw[fill] (-1,1) circle (0.1cm);
\draw[fill] (-2,1) circle (0.1cm);
\draw[fill] (-3,1) circle (0.1cm);
\draw[fill] (-4,1) circle (0.1cm);
\draw[fill] (-5,1) circle (0.1cm);

\draw[fill] (0,2) circle (0.1cm);
\draw[fill] (1,2) circle (0.1cm);
\draw[fill] (2,2) circle (0.1cm);
\draw[fill] (3,2) circle (0.1cm);
\draw[fill] (4,2) circle (0.1cm);
\draw[fill] (5,2) circle (0.1cm);
\draw[fill] (-1,2) circle (0.1cm);
\draw[fill] (-2,2) circle (0.1cm);
\draw[fill] (-3,2) circle (0.1cm);
\draw[fill] (-4,2) circle (0.1cm);
\draw[fill] (-5,2) circle (0.1cm);

\draw[fill] (0,3) circle (0.1cm);
\draw[fill] (1,3) circle (0.1cm);
\draw[fill] (2,3) circle (0.1cm);
\draw[fill] (3,3) circle (0.1cm);
\draw[fill] (4,3) circle (0.1cm);
\draw[fill] (5,3) circle (0.1cm);
\draw[fill] (-1,3) circle (0.1cm);
\draw[fill] (-2,3) circle (0.1cm);
\draw[fill] (-3,3) circle (0.1cm);
\draw[fill] (-4,3) circle (0.1cm);
\draw[fill] (-5,3) circle (0.1cm);

\draw[ar] (0,0)--(1,0);
\draw[ar] (1,0)--(2,0);
\draw[ar] (2,0)--(3,0);
\draw[ar] (3,0)--(4,0);
\draw[ar] (4,0)--(5,0);
\draw[ar,dotted] (5,0)--(6,0);

\draw[ar] (0,1)--(-1,1);
\draw[ar] (-1,1)--(-2,1);
\draw[ar] (-2,1)--(-3,1);
\draw[ar] (-3,1)--(-4,1);
\draw[ar] (-4,1)--(-5,1);
\draw[ar,dotted] (-5,1)--(-6,1);
\draw[ar] (1,1)--(0,1);
\draw[ar] (2,1)--(1,1);
\draw[ar] (3,1)--(2,1);
\draw[ar] (4,1)--(3,1);
\draw[ar] (5,1)--(4,1);
\draw[ar,dotted] (6,1)--(5,1);

\draw[ar] (0,2)--(1,2);
\draw[ar] (1,2)--(2,2);
\draw[ar] (2,2)--(3,2);
\draw[ar] (3,2)--(4,2);
\draw[ar] (4,2)--(5,2);
\draw[ar,dotted] (5,2)--(6,2);
\draw[ar] (-1,2)--(0,2);
\draw[ar] (-2,2)--(-1,2);
\draw[ar] (-3,2)--(-2,2);
\draw[ar] (-4,2)--(-3,2);
\draw[ar] (-5,2)--(-4,2);
\draw[ar,dotted] (-6,2)--(-5,2);

\draw[ar] (0,3)--(-1,3);
\draw[ar] (-1,3)--(-2,3);
\draw[ar] (-2,3)--(-3,3);
\draw[ar] (-3,3)--(-4,3);
\draw[ar] (-4,3)--(-5,3);
\draw[ar,dotted] (-5,3)--(-6,3);
\draw[ar] (1,3)--(0,3);
\draw[ar] (2,3)--(1,3);
\draw[ar] (3,3)--(2,3);
\draw[ar] (4,3)--(3,3);
\draw[ar] (5,3)--(4,3);
\draw[ar,dotted] (6,3)--(5,3);

\draw[ar,dashed] (0,0)--(0,1);
\draw[ar,dashed] (0,1)--(0,2);
\draw[ar,dashed] (0,2)--(0,3);
\draw[ar,dashed,dotted] (0,3)--(0,4);

\draw[ar,dashed] (1,0)--(1,1);
\draw[ar,dashed] (1,1)--(1,2);
\draw[ar,dashed] (1,2)--(1,3);
\draw[ar,dashed,dotted] (1,3)--(1,4);

\draw[ar,dashed] (2,0)--(2,1);
\draw[ar,dashed] (2,1)--(2,2);
\draw[ar,dashed] (2,2)--(2,3);
\draw[ar,dashed,dotted] (2,3)--(2,4);

\draw[ar,dashed] (3,0)--(3,1);
\draw[ar,dashed] (3,1)--(3,2);
\draw[ar,dashed] (3,2)--(3,3);
\draw[ar,dashed,dotted] (3,3)--(3,4);

\draw[ar,dashed] (4,0)--(4,1);
\draw[ar,dashed] (4,1)--(4,2);
\draw[ar,dashed] (4,2)--(4,3);
\draw[ar,dashed,dotted] (4,3)--(4,4);

\draw[ar,dashed] (5,0)--(5,1);
\draw[ar,dashed] (5,1)--(5,2);
\draw[ar,dashed] (5,2)--(5,3);
\draw[ar,dashed,dotted] (5,3)--(5,4);

\draw[ar,dashed] (-1,1)--(-1,2);
\draw[ar,dashed] (-1,2)--(-1,3);
\draw[ar,dashed,dotted] (-1,3)--(-1,4);

\draw[ar,dashed] (-2,1)--(-2,2);
\draw[ar,dashed] (-2,2)--(-2,3);
\draw[ar,dashed,dotted] (-2,3)--(-2,4);

\draw[ar,dashed] (-3,1)--(-3,2);
\draw[ar,dashed] (-3,2)--(-3,3);
\draw[ar,dashed,dotted] (-3,3)--(-3,4);

\draw[ar,dashed] (-4,1)--(-4,2);
\draw[ar,dashed] (-4,2)--(-4,3);
\draw[ar,dashed,dotted] (-4,3)--(-4,4);

\draw[ar,dashed] (-5,1)--(-5,2);
\draw[ar,dashed] (-5,2)--(-5,3);
\draw[ar,dashed,dotted] (-5,3)--(-5,4);
\end{tikzpicture}
\end{center}

\end{figure}
\end{ex}

The following example demonstrates that in general a subsemigroup of finite Green index may have a different number of ends from the original semigroup.
\begin{ex}\label{ex:GreenFail}
Let $\{0,1\} $ be the semigroup with the usual multiplication (of real numbers).
Consider the semigroup $\mathbb{Z} \times \mathbb{Z} \times \{0,1\}$. Then $T=\mathbb{Z}\times \mathbb{Z}\times \{1\}$ is a subsemigroup and $\mathbb{Z}\times
\mathbb{Z}\times \{0\}$ is an $\mathcal{H}^T$-class in the complement. 
Hence $\mathbb{Z}\times \mathbb{Z}\times \{1\}$ has finite Green index in $\mathbb{Z} \times \mathbb{Z} \times \{0,1\}$. However, by inspection we see $\mathbb{Z} 
\times \mathbb{Z} \times \{0,1\}$ has $2$ ends corresponding to $\mathbb{Z} \times \mathbb{Z} \times \{1\}$ and $\mathbb{Z} \times \mathbb{Z} \times \{0\}$, however, $
\mathbb{Z} \times \mathbb{Z} \times \{1\}$ has only $1$ end.
For a diagram of a portion of the right Cayley graph of $\mathbb{Z} \times \mathbb{Z} \times \{0,1\}$ see Figure \ref{fig:GreenFail}. 

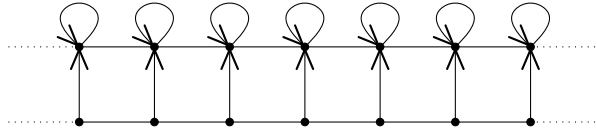
\begin{figure}
\caption{ \label{fig:GreenFail} A portion of the right Cayley graph of the semigroup defined by the presentation $\mathbb{Z}\times\{0,1\}$ from Example \ref{ex:GreenFail}.}
\begin{center}
\begin{tikzpicture}[>=angle 45]
\node at (0,0) [vertex] {};
\node at (0,1) [vertex] {};
\node at (1,0) [vertex] {};
\node at (1,1) [vertex] {};
\node at (2,0) [vertex] {};
\node at (2,1) [vertex] {};
\node at (3,0) [vertex] {};
\node at (3,1) [vertex] {};
\node at (-1,0) [vertex] {};
\node at (-1,1) [vertex] {};
\node at (-2,0) [vertex] {};
\node at (-2,1) [vertex] {};
\node at (-3,0) [vertex] {};
\node at (-3,1) [vertex] {};

\draw (-3,0)--(3,0);
\draw  (-3,1)--(3,1);
\draw [dotted] (3,0)--(4,0);
\draw [dotted] (-3,0)--(-4,0);
\draw [dotted] (3,1)--(4,1);
\draw [dotted] (-3,1)--(-4,1);

\draw[ar] (0,0)--(0,1); \myloop{(0,1)}{(0.2,1.5)}{(-0.2,1.5)}
\draw[ar] (1,0)--(1,1); \myloop{(1,1)}{(1.2,1.5)}{(0.8,1.5)}
\draw[ar] (2,0)--(2,1); \myloop{(2,1)}{(2.2,1.5)}{(1.8,1.5)}
\draw[ar] (3,0)--(3,1); \myloop{(3,1)}{(3.2,1.5)}{(2.8,1.5)}
\draw[ar] (-1,0)--(-1,1); \myloop{(-1,1)}{(-0.8,1.5)}{(-1.2,1.5)}
\draw[ar] (-2,0)--(-2,1); \myloop{(-2,1)}{(-1.8,1.5)}{(-2.2,1.5)}
\draw[ar] (-3,0)--(-3,1); \myloop{(-3,1)}{(-2.8,1.5)}{(-3.2,1.5)}

\end{tikzpicture}
\end{center}
\end{figure}
\end{ex}

Following Theorem \ref{thm_greens} one might question whether for a left cancellative semigroup it is possible to show that the end poset  of a subsemigroup of finite 
Green index is isomorphic to the end poset of the semigroup. The following example answers this in the negative.

\begin{ex}
Consider the semigroup $S=\mathbb{Z}\times\mathbb{Z}\times\mathbb{N}_0$ under componentwise addition. The subsemigroup $T=\mathbb{Z}\times\mathbb{Z}
\times(\mathbb{N}_0\setminus \{1\})$ is of finite Green index as the complement consists of $1$ $\mathcal{H}^T$-class. One can see that $S$ has $\aleph_0$ ends 
corresponding to each $\mathbb{Z}\times\mathbb{Z}\times\{i\}$ and to $\{0\}\times\{0\}\times\mathbb{N}_0$. In the poset of ends of $S$ any two elements are 
comparable. Either by inspection or by Theorem \ref{thm_greens} we see that $T$ also has $\aleph_0$ ends. However, the are no paths from $\mathbb{Z}\times
\mathbb{Z}\times \{2\}$ to $\mathbb{Z}\times\mathbb{Z}\times \{3\}$ or vice versa and hence the ends in these components cannot be comparable.
\end{ex}

The following proposition describes the left and right end posets of Rees matrix semigroups. As a corollary we see that for any $n,m\in\N$ there exists a semigroup with 
$n$ left ends and $m$ right ends.

Recall a Rees matrix semigroup $\mathcal{M}[G;I,\Lambda;P]$ has elements $I\times G \times \Lambda$ where $G$ is a group and $I$ and $\Lambda$ are index sets. Multiplication is defined by $(i,g,\lambda)(j,h,\mu)=(i,gp_{\lambda j}h,\mu)$ where $P=(p_{\lambda j})_{\lambda\in \Lambda,j\in I}$ is a $|\Lambda|\times |I|$ matrix over $G$. 

\begin{prop}\label{ReesCharact}
If $S$ is the Rees matrix semigroup $\mathcal{M}[G;I,\Lambda;P]$ where $I=\{i_1,i_2, \ldots , i_n\}$ and $\Lambda=\{\lambda_1,\lambda_2, \ldots \lambda_m\}$, $G$ is 
a finitely generated group and $P$ is a $m\times n$ matrix with entries in $G$ then the right ends of $S$ form an anti-chain of size $n\cdot |\Omega G|$ and the left ends of $S$ 
form an anti-chain of size $m\cdot |\Omega G|$.
\end{prop}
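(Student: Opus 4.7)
The plan is to exploit the asymmetry of multiplication in a Rees matrix semigroup: right multiplication fixes the first coordinate while left multiplication fixes the third. I will show that, accordingly, the right Cayley graph splits into $n$ pairwise isomorphic and pairwise disconnected components, each with end poset isomorphic to $\Omega G$; a dual argument handles the left case. Throughout, I invoke Corollary~\ref{change_gen} freely to change the finite generating set $A$ of $S$.

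\smallskip

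\emph{Decomposition.} The identity $(i,g,\lambda)(j,h,\mu)=(i,gp_{\lambda j}h,\mu)$ shows that right multiplication preserves the first coordinate, so $\Gamma_r(S,A)$ decomposes as the disjoint union of its induced subdigraphs $\Delta_i$ on the sets $T_i=\{i\}\times G\times\Lambda$, $i\in I$. The out-neighbours of a vertex $(i,g,\lambda)$ under any generator depend only on $g$, $\lambda$ and the generator, so $(i,g,\lambda)\mapsto(i',g,\lambda)$ is a digraph isomorphism $\Delta_i\to\Delta_{i'}$. Since the $\Delta_i$ are pairwise disconnected, ends in distinct components are mutually incomparable under $\preccurlyeq$, so $\Omega S$ is the poset disjoint union of the $\Omega\Delta_i$. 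It therefore suffices to show that $\Omega\Delta_i$ is an anti-chain of cardinality $|\Omega G|$ for one (hence every) $i$.

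\smallskip

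\emph{Structure of $T_i$.} Fix $i\in I$. Then $T_i$ is a subsemigroup of $S$ which is left cancellative (from cancellation in $G$) and $\rrel^{T_i}$-simple: given $(i,g,\lambda),(i,h,\mu)\in T_i$, the element $(i,p_{\lambda i}^{-1}g^{-1}h,\mu)\in T_i$ sends the first to the second under right multiplication. By Theorem~\ref{RightGroupLCRSimp}, $T_i\cong H\times E$ where $H$ is a group and $E$ is a right zero semigroup. The idempotents of $T_i$ are precisely the $m$ elements $(i,p_{\lambda i}^{-1},\lambda)$ with $\lambda\in\Lambda$, so $|E|=m$; a direct calculation on the $\mathcal{H}$-class of any idempotent identifies $H$ with $G$. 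In particular $T_i$ is finitely generated.

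\smallskip

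\emph{Ends of $\Delta_i$.} The crux is to show $\Omega\Delta_i\cong\Omega G$ as posets. I would enlarge $A$ (using Corollary~\ref{change_gen}) to contain both a finite generating set for $T_i$ and the ``switching'' elements $\{(j,1_G,\mu):j\in I,\ \mu\in\Lambda\}$, and then consider the projection $\pi:T_i\to G$, $(i,g,\lambda)\mapsto g$. Under this choice every edge of $\Delta_i$ projects either to a loop (a sheet-change via a switching generator $(i,1_G,\mu)$) or to an edge of a Cayley graph $\Gamma_r(G,B)$ for a finite set $B\subseteq G$ containing a generating set of $G$ together with the necessary twists by entries of $P$. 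The fibres of $\pi$ have uniform size $m$, and any two vertices in the same fibre are joined by a single switching edge. A replacement argument in the spirit of Proposition~\ref{master} (using Lemmas~\ref{walktoray} and~\ref{concatenation}) then transfers rays, anti-rays, and disjoint-path families between $\Delta_i$ and $\Gamma_r(G,B)$, yielding a poset isomorphism $\Omega\Delta_i\cong\Omega G$. By Hopf's Theorem, $\Omega G$ is an anti-chain of cardinality $1$, $2$, or $2^{\aleph_0}$, so the same is true of $\Omega\Delta_i$.

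\smallskip

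\emph{Conclusion and left ends.} Combining the three steps gives that $\Omega S$ is an anti-chain of cardinality $n\cdot|\Omega G|$. For the left case, the dual identity $(j,h,\mu)(i,g,\lambda)=(j,hp_{\mu i}g,\lambda)$ shows that left multiplication fixes the third coordinate; applying the entire argument to $S^{\ast}$, whose right Cayley graph coincides with $\Gamma_l(S,A)$, splits the latter into $m$ pairwise isomorphic and pairwise disconnected components indexed by $\Lambda$, each again contributing $|\Omega G|$ incomparable ends, giving $m\cdot|\Omega G|$ left ends. The principal difficulty is in the third step: because the action of an external generator $(j,h,\mu)\in A$ on $(i,g,\lambda)$ depends on the current sheet $\lambda$ through the factor $p_{\lambda j}$, $\Delta_i$ is not literally a Cayley graph of $T_i$, and the comparison with a standard Cayley graph of $G$ requires a bounded-length replacement that keeps careful track of which sheet each path currently occupies.
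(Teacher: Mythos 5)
Your overall architecture coincides with the paper's: decompose $\Gamma_r(S,A)$ into the $n$ disconnected, pairwise isomorphic components on the vertex sets $\{i\}\times G\times\Lambda$, show each component has end poset isomorphic to $\Omega G$, invoke Hopf's Theorem for the anti-chain property, and dualise for left ends. The first, second and fourth steps are fine. The problem is the third step, which is where all the real work of the proposition lives, and there you have both a concretely false sub-claim and a deferred argument. With your switching generators $(j,1_G,\mu)$ the edge out of $(i,g,\lambda)$ is
\[
(i,g,\lambda)(j,1_G,\mu)=(i,\,g\,p_{\lambda j},\,\mu),
\]
which lands in the fibre of $\pi$ over $g\,p_{\lambda j}$, not over $g$. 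So your assertion that ``any two vertices in the same fibre are joined by a single switching edge'' fails whenever the relevant entry of $P$ is nontrivial, and the picture of $\Delta_i$ as $m$ parallel copies of $\Gamma_r(G,B)$ glued along fibres does not hold for that generating set. The paper avoids this by engineering the generating set as $A=\{(j,p_{\mu j}^{-1}x,\lambda):x\in X,\ \mu,\lambda\in\Lambda,\ j\in I\}$ with $1_G\in X$: from a vertex $(i,g,\mu)$ the generator $(j,p_{\mu j}^{-1}x,\lambda)$ produces the edge to $(i,gx,\lambda)$, so sheet changes are untwisted, the $\lambda$-sheet $\Gamma_{i,\lambda}$ is literally a copy of $\Gamma_r(G,X)$, and every vertex has an edge into it.

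The remaining content — that every ray (and anti-ray) of $\Delta_i$ is equivalent to a ray of the fixed sheet, and that incomparability in the sheet persists in $\Delta_i$ — is exactly what you wave at with ``a replacement argument in the spirit of Proposition~\ref{master}.'' That proposition does not apply here ($\Delta_i$ is not obtained from a subsemigroup by adjoining finitely many elements), so an actual argument is needed: the paper projects a ray to the sheet, joins consecutive projected vertices by shortest paths of length bounded by $K=\max\{|p_{j\mu}p_{k\nu}^{-1}|_X\}+1$, uses out-local finiteness to show no vertex of the resulting walk repeats infinitely often, extracts a ray by Lemma~\ref{walktoray}, and separately lifts finite separators from the sheet back to $\Delta_i$ using the same bound $K$. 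You correctly identify this as ``the principal difficulty,'' but identifying the difficulty is not the same as resolving it; as written the isomorphism $\Omega\Delta_i\cong\Omega G$, and hence the count $n\cdot|\Omega G|$, is asserted rather than proved. Fixing the generating set as above and then carrying out the bounded-length replacement and separator-lifting explicitly would close the gap and recover the paper's proof.
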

\begin{proof}
Let $X$ be a finite semigroup generating set for $G$ containing $1_G$ and let 

\begin{equation*}
A=\{(i,p_{\mu,j}^{-1}x,\lambda)| x\in X , \lambda,\mu \in \Lambda, i,j \in I\}.
\end{equation*}
\vspace{\baselineskip}
Clearly $A$ is a finite generating set for $S$.

Let $\Gamma_i$ be the induced subgraph of $\Gamma_r(S,A)$ on the vertices $\{i\} \times G \times \Lambda$ and let $\Gamma_{i,\lambda}$ be the subgraph of $
\Gamma_i$ with 
vertices $\{i\} \times G \times \{\lambda\}$ and edges with labels $(i,p_{\lambda,i}^{-1}x,\lambda)$. As $(i,g,\lambda)(j,h,\mu)=(i,gp_{\lambda,j}h,\mu)$ note that $
\Gamma_r(S,A)$ is the 
disjoint union of the $\Gamma_i$. This means that $\Omega\Gamma_r(S,A)$ is $n$ incomparable copies of $\Omega\Gamma_i$. As all ends in $\Omega G$ are 
incomparable it suffices to show that $\Omega \Gamma_i$ is isomorphic to $\Omega G$ for all $i \in I$.

We first note that for a fixed $\lambda \in \Lambda$, $\Gamma_{i,\lambda}$ is isomorphic to $\Gamma_r(G,X)$. We now prove that any ray in $\Gamma_i$ is 
equivalent to a
ray in $\Gamma_{i,\lambda}$, the proof for anti-rays is analogous. Let $\mathbf{r}=((i,g_0,\lambda_{j_0}),(i,g_1,\lambda_{j_1})\ldots )$ be a ray and let $\mathbf{r'}
=((i,g_0,\lambda),(i,g_1,\lambda)\ldots )$ be a sequence in $\Gamma_{i,\lambda}$. We show that there is an infinite walk $\mathbf{w}$ in $\Gamma_{i,\lambda}$ 
containing $\mathbf{r'}$ in which every vertex appears finitely often.

We construct $\mathbf{w}$ by concatenating the shortest paths in $\Gamma_{i,\lambda}$ between each $(i,g_k,\lambda)$ and $(i,g_{k+1},\lambda)$, these shortest 
paths exist 
because $\Gamma_{i,\lambda}$ is isomorphic to $\Gamma(G,X)$. Next we show that there is a global bound on the lengths of these shortest paths. If $(i,g,\mu)=(i,h,\nu)
(j,p_{\xi,k}^{-1}
x,\pi)$ then it follows $\mu=\pi$ and $g=hp_{\nu,j}p_{\xi,k}^{-1}x$. This means the shortest path in $\Gamma_{i,\lambda}$ between any consecutive elements of $
\mathbf{r'}$ is of 
length less than $K=\max\{|p_{j,\mu}p_{k,\nu}^{-1}|_X:j,k\in I, \mu,\nu \in \Lambda\}+1$. As $\mathbf{r}$ is a ray it follows there are at most $|\Lambda|$ repetitions of 
vertices in $
\mathbf{r'}$. Every vertex of $\mathbf{w}$ has a path of length less than $K$ to a vertex of $\mathbf{r'}$ and as $\Gamma_{i,\lambda}$ is out-locally finite this means that 
if some vertex $v$ appears infinitely often in $\mathbf{w}$ then infinitely many elements of $\mathbf{r'}$ can be reached from $v$ by a path of length less than or equal to 
$K$. But each vertex in $\mathbf{r'}$ appears at most $|\Lambda|$ times 
so any infinite set of elements of $\mathbf{r'}$ contains infinitely many vertices, a contradiction. By Lemma \ref{walktoray}, $\mathbf{w}$ contains a ray $\mathbf{s}$ with 
infinitely many 
disjoint paths from $\mathbf{s}$ to and from $\mathbf{r'}$ and hence to and from $\mathbf{r}$. 

This means any ray in $\Gamma_i$ is equivalent to a ray in $\Gamma_{i,\lambda}$, to complete the proof we must now verify that if we have rays $\mathbf{r_1}$ and $
\mathbf{r_2}$ in 
$\Gamma_{i,\lambda}$ such that $\mathbf{r}_1 \not\preccurlyeq \r_2$ then $\r_1 \not\preccurlyeq \r_2$ in $\Gamma_i$. Let $\mathbf{r_1}$ and $\mathbf{r_2}$ be incomparable rays 
in $\Gamma_{i,\lambda}$,  
as the rays are incomparable in $\Gamma_{i,\lambda}$ there exists a finite set $F=\{(i,f_1,\lambda),\ldots ,(i,f_m,\lambda)\}$ such that all paths from $\mathbf{r_1}$ to $
\mathbf{r_2}$ in 
$\Gamma_{i,\lambda}$ pass through $F$. For any edge $((i,g,\mu),(i,gp_{\mu,j}p_{\nu,k}^{-1}x,\xi))$ we have a word $w=x_1x_2\ldots x_p$ 
over $X$ 
of minimal length such that $w=_G  p_{\mu,j}p_{\nu,k}^{-1}x$ and a corresponding path

\begin{eqnarray*}
((i,g,\mu),(i,g,\lambda) ,(i,gx_1,\lambda),\ldots , (i,gx_1x_2\cdots x_p,\lambda),(i,gp_{\mu,j}p_{\nu,k}^{-1}x,\xi)).
\end{eqnarray*}

This means that any path in $\Gamma_i$ has a corresponding walk in $\Gamma_{i,\lambda}$ such that any point on the walk has a path of length less than $K+2$ to a 
vertex on the 
path in $\Gamma_i$. This means any path $\pi$ from $\mathbf{r_1}$ to $\mathbf{r_2}$ in $\Gamma_i$ has a corresponding walk in $\Gamma_{i,\lambda}$ and this must 
pass through 
$F$ and hence $\pi$ must be contain an element that can be reached from $F$ by a path of length less than or equal to $K+2$. As $\Gamma_i$ is out-locally finite there 
are only finitely many such elements so $\mathbf{r_1} \not\preccurlyeq \mathbf{r_2}$. 
\end{proof}

\end{document}